\def\a{a}
\def\b{\vec{\mathbf{\beta}}}
\title{Optimal Preconditioners for Finite Element Approximations
of Convection-Diffusion Equations on structured meshes \thanks{The
work of the first author was partially supported by MIUR, grant
number and the work of the second author and third author was
partially supported by MIUR, grant number 2006017542 and
20083KLJEZ.}}
\author{Alessandro Russo\thanks{Dipartimento di Matematica e Applicazioni,
Universit\`a di Milano Bicocca, via Cozzi 53, 20125 Milano, Italy
({\tt alessandro.russo@unimib.it}).} \and Stefano Serra
Capizzano\thanks{Dipartimento di Scienza ed Alta Tecnologia,
Universit\`a
    dell'Insubria - Sede di Como, Via Valleggio 11, 22100 Como,
    Italy ({\tt stefano.serrac@uninsubria.it}).}
        \and Cristina Tablino Possio\thanks{Dipartimento di Matematica e Applicazioni,
Universit\`a di Milano Bicocca, via Cozzi 53, 20125 Milano, Italy
({\tt cristina.tablinopossio@unimib.it}).}}
\begin{document}
\maketitle
%-------------------------------------------------------------------------------
\begin{abstract}
The paper is devoted to the spectral analysis of effective preconditioners for linear systems obtained via a
 Finite Element approximation to diffusion-dominated convection-diffusion equations. We consider a model setting in which
the structured finite element partition is  made by equi-lateral
triangles. Under such assumptions,
 if the problem is coercive, and the diffusive and convective coefficients are regular enough, then the proposed preconditioned
matrix sequences exhibit a strong clustering at unity, the preconditioning matrix sequence and the original matrix sequence
are spectrally equivalent, and the eigenvector matrices have a mild conditioning. The obtained results allow to show the optimality
of the related preconditioned Krylov methods.
%It is important to stress that
The interest of such a study relies on the observation that
automatic grid generators tend to construct equi-lateral triangles
when the mesh is fine enough. Numerical tests, both on the model
setting and in the non-structured case, show the effectiveness of
the proposal and the correctness of the theoretical findings.
\end{abstract}
%-------------------------------------------------------------------------------
\begin{keywords}
Matrix sequences, clustering, preconditioning, non-Hermitian matrix, Krylov methods, Finite Element approximations
\end{keywords}
\begin{AMS}
65F10, 65N22, 15A18, 15A12, 47B65
\end{AMS}
\pagestyle{myheadings} \thispagestyle{plain} \markboth{A. RUSSO,
S. SERRA CAPIZZANO, AND C. TABLINO POSSIO}{FE Preconditioning of
convection-diffusion Eqs}
%-------------------------------------------------------------------------------
%
\section{Introduction} \label{sez:introduction}
The paper is concerned with the spectral and computational analysis of
effective preconditioners for linear systems arising from Finite Element approximations to the elliptic convection-diffusion problem
\begin{equation} \label{eq:modello}
\left \{
\begin{array}{l}
\mathrm{div} \left(-\a(\mathbf{x}) \nabla u +\b(\mathbf{x}) u
\right)
=f, \quad \mathbf{x}\in \Omega,\\
u_{|\partial \Omega}=0,
\end{array}
\right.
\end{equation}
with $\Omega$ domain of $\mathbb{R}^2$.
%with $\a(\mathbf{x})$ being a uniformly positive function and $\b(\mathbf{x})$ denoting the convective
%velocity field. \newline
%
We consider a model setting in which the structured finite element
partition is  made by equi-lateral triangles. The interest of such
a partition relies on the observation that automatic grid
generators tend to construct equi-lateral triangles when the mesh
is fine enough. \par
The analysis is performed having in mind two popular
preconditioned Krylov methods. More precisely, we analyze the
performances of the Preconditioned Conjugate Gradient (PCG) method
in the case of the diffusion problem and of the Preconditioned
Generalized Minimal Residual (PGMRES) in the case of the
convection-diffusion problem. \par
We define the preconditioner as a combination of a basic
(projected) Toeplitz matrix times diagonal structures. The
diagonal part takes into account the variable coefficients in the
operator of (\ref{eq:modello}), and especially the diffusion
coefficient $\a(\mathbf{x})$, while the (projected) Toeplitz part
derives from a special approximation of (\ref{eq:modello}) when
setting the diffusion coefficient to $1$ and the  convective
velocity field to $0$. Under such assumptions,
 if the problem is coercive, and the diffusive and convective coefficients are regular enough, then the proposed preconditioned
matrix sequences have a strong clustering at unity, the preconditioning matrix sequence and the original matrix sequence
are spectrally equivalent, and the eigenvector matrices have a mild conditioning. The obtained results allow to show the optimality
of the related preconditioned Krylov methods. It is important to stress that interest of such a study relies on the observation that
automatic grid generators tend to construct equi-lateral triangles when the mesh is fine enough.
Numerical tests, both on the model setting and in the non-structured case, show the effectiveness of the proposal and the correctness
of the theoretical findings.
%-----------------------------------------------------------------------------------------------------------------
%The case A) represents clearly an academic example; however a professional grid generator will produce a gridding as in B), which is 'asymptotically' %similar to the one in A)
%- The latter fact has a practical important counterpart since the academic preconditioner $\tilde P_n(a)$ is optimal for the real case with %nonconstant coefficients and with the gridding as in B)
%
%- A theoretical ground supporting these observation is still missing and would be worth in my opinion to be studied and developed.
%-----------------------------------------------------------------------------------------------------------------
%-------------------------------------------------------------------------------
\par
The outline of the paper is as follows. In Section \ref{sez:fem} we report a brief description of
the FE approximation of convection-diffusion equations and the preconditioner definition.
%Section \ref{sez:preconditioning} analyzes the
%spectral properties of the matrix sequences arising from FE approximations of the considered
%convection-diffusion problem.
Section \ref{sez:clustering} is
devoted to the spectral analysis of the  underlying preconditioned matrix sequences,
in the case of structured uniform meshes. In Section \ref{sez:numerical_tests}, after a preliminary discussion
on complexity issues, selected numerical tests illustrate the convergence properties
stated in the former section and their extension
under weakened assumption or in the case of unstructured meshes.
A final Section \ref{sez:conclusions} deals with perspectives and future works.
%-------------------------------------------------------------------------------
\section{Finite Element approximation and Preconditioning Strategy} \label{sez:fem}
Problem (\ref{eq:modello}) can be stated in variational form as follows:
\begin{equation} \label{eq:formulazione_variazionale}
\left \{
\begin{array}{l}
\textrm{find $u \in H_0^1(\Omega)$ such that} \\
\int_\Omega \left ( \a \nabla u \cdot \nabla \varphi -\b \cdot \nabla
\varphi \ u
%+ \s u \varphi
\right )
=\int_\Omega f \varphi  \quad \textrm{for all } \varphi \in  H_0^1(\Omega),
\end{array}
\right.
\end{equation}
where $H_0^1(\Omega)$ is the space of square integrable functions, with square integrable weak derivatives vanishing on
$\partial \Omega$.
We assume that $\Omega$ is a polygonal domain and we make the following hypotheses on the coefficients:
\begin{equation} \label{eq:ipotesi_coefficienti}
\left \{
\begin{array}{l}
\a \in {\bf C}^2(\overline \Omega),\quad \textrm{ with } \a(\mathbf{x}) \ge a_0 >0, \\
\b \in {\bf C}^1(\overline \Omega),\quad \textrm{ with } \mathrm{div} (\b) \ge 0 \textrm{ pointwise in } \Omega, \\
f \in {L}^2(\Omega).
\end{array}
\right.
\end{equation}
%
%$\a \in {\bf C}^2(\overline \Omega)$, with $\a(\mathbf{x}) \ge a_0 >0$,
%$\b \in {\bf C}^1(\overline \Omega)$, $\s \in {\bf C}^0(\overline \Omega)$, $f \in \mathcal{L}^2(\Omega)$,
%and $\frac{1}{2} \mathrm{div} \b + \s \ge 0$ pointwise in $\Omega$.
%
The previous assumptions guarantee existence and uniqueness for problem (\ref{eq:formulazione_variazionale})
and hence the existence and uniqueness of the (weak) solution for problem (\ref{eq:modello}).
\par
For the sake of simplicity, we restrict ourselves to linear finite element approximation of problem
(\ref{eq:formulazione_variazionale}). To this end, let $\mathcal{T}_h=\{K\}$ be a usual finite element partition
of $\overline \Omega$ into triangles, with $h_K=\mathrm{diam}(K)$ and $h=\max_K{h_K}$.
Let $V_h \subset H^1_0(\Omega)$ be the space of linear finite elements, i.e.
\[
V_h=\{\varphi_h : \overline \Omega \rightarrow \mathbb{R} \ \textrm{ s.t. } \varphi_h \textrm{ is continuous,  }
{\varphi_h}_{|_K} \textrm{ is linear, and }
{\varphi_h}_{| \partial \Omega} =0
 \}.
\]
The finite element approximation of problem (\ref{eq:formulazione_variazionale}) reads:
\begin{equation} \label{eq:formulazione_variazionale_fe}
\left \{
\begin{array}{l}
\textrm{find $u_h \in V_h$ such that} \\
\int_\Omega \left (\a \nabla u_h \cdot \nabla \varphi_h -\b \cdot \nabla
\varphi_h \ u_h
%+ \s u_h \varphi_h
\right )
=\int_\Omega f \varphi_h \quad \textrm{for all } \varphi_h \in  V_h.
\end{array}
\right.
\end{equation}
For each internal node $i$ of the mesh $\mathcal{T}_h$, let $\varphi_i \in V_h$ be such that $\varphi_i(\textrm{node }i)=1$, and
$\varphi_i(\textrm{node }j)=0$ if $i\ne j$. Then, the collection of all $\varphi_i$'s is a base for $V_h$. We will denote
by $n(h)$ the number of the internal nodes of $\mathcal{T}_h$, which corresponds to the dimension of $V_h$. Then, we write $u_h$ as
\(
u_h=\sum_{j=1}^{n(h)}u_j \varphi_j
\)
and the variational equation (\ref{eq:formulazione_variazionale_fe}) becomes an algebraic linear system:
\begin{equation} \label{eq:modello_discreto}
\sum_{j=1}^{n(h)}\left (\int_\Omega \a \nabla \varphi_j \cdot \nabla
\varphi_i - \b \cdot \nabla \varphi_i\ \varphi_j
%+ \s \varphi_i \ \varphi_j
\right ) u_j =\int_\Omega f \varphi_i, \quad i=1,\ldots, n(h).
\end{equation}
According to these notations and definitions,
the algebraic equations in (\ref{eq:modello_discreto}) can be rewritten in
matrix form as the linear system
\begin{equation} \label{eq:def_A}
A_n(\a,\b) \mathbf{x} = \mathbf{b}, \quad  A_n(\a,\b)=\Theta_n(\a)+\Psi_n(\b)\in \mathbb{R}^{n\times n}, \
n=n(h),
\end{equation}
where $\Theta_n(\a)$ and $\Psi_n(\b)$ represent the approximation of the diffusive
term and approximation of the convective term, respectively. More precisely, we have
\begin{eqnarray}
(\Theta_n(\a))_{i,j} &=& \int_\Omega \a \ \nabla \varphi_j \cdot \nabla
\varphi_i, \label{eq:def_theta}\\
(\Psi_n(\b))_{i,j}&=& -\int_\Omega (\b \cdot \nabla \varphi_i ) \
\varphi_j, \label{eq:def_psi}
\end{eqnarray}
where suitable quadrature formula are considered in the case of
non constant coefficient functions $\a$ and $\b$.\par
%-------------------------------------------------------------------------------
As well known, the main drawback in the linear system resolution is due to the asymptotical ill-conditioning
(i.e. very large for large dimensions),
so that preconditioning is highly recommended. Hereafter, we refer to a preconditioning strategy
previously analyzed in the case of FD/FE
approximations of the diffusion problem \cite{S-NM-1999,ST-LAA-1999,ST-ETNA-2000,ST-ETNA-2003,ST-SIMAX-2003,ST-NA-2001}
and recently applied to FD/FE approximations \cite{BGST-NM-2005,BGS-SIMAX-2007,RT-TR-2008} of (\ref{eq:modello})
with respect to the Preconditioned Hermitian and Skew-Hermitian Splitting (PHSS) method \cite{BGN-SIMAX-2003,BGST-NM-2005}.
More precisely, the preconditioning matrix sequence $\{P_n(\a)\}$  is defined as
\begin{equation} \label{eq:def_P}
P_n(\a)=D_n^{\frac{1}{2}}(\a) A_n(1,0) D_n^{\frac{1}{2}}(\a)
\end{equation}
where
$D_n(\a)=\mathrm{diag}(A_n(\a,0))\mathrm{diag}^{-1}(A_n(1,0))
%=\mathrm{diag}(\Theta_n(\a))\mathrm{diag}^{-1}(\Theta_n(1))
$, i.e., the suitable scaled main diagonal of $A_n(\a,0)$ and
clearly $A_n(\a,0)$ equals $\Theta_n(\a)$.\par
The computational aspects of this preconditioning strategy with
respect to Krylov methods will be discussed later in section
\ref{sez:complexity_issues}. Here, preliminarily we want to stress
as the preconditioner is tuned only with respect to the diffusion
matrix $\Theta_n(\a)$: in other words, we are implicity assuming
that the convection phenomenon is not dominant, and no
stabilization is required in order to avoid spurious oscillations
into the solution. \par
%
% Sistemare questione ReA >0
%
Moreover, the spectral analysis is performed in the non-Hermitian
case by referring to the Hermitian and skew-Hermitian (HSS)
decomposition of $A_n(\a,\b)$ (that can be performed on any single
elementary matrix related to $\mathcal{T}_h$ by considering the
standard assembling procedure).\par
%
%
%\begin{equation}\label{eq:HSS_splitting}
%A_n = \mathrm{Re}(A_n) + \mathrm{i}\, \mathrm{Im} (A_n), \quad \mathrm{i}^2=-1
%\end{equation}
%where
%\[
%\mathrm{Re}(A_n)= \frac{A_n+A_n^H}{2}\quad  \mathrm{and} \quad
%\mathrm{Im}(A_n)= \frac{A_n-A_n^H}{2\mathrm{i}}
%% A=H+S, H=Re(A) e im(A)=-iS
%\]
%
%
According to the definition, the HSS decomposition is given by
\begin{eqnarray}
\mathrm{Re}(A_n(\a,\b)) &:=& \frac{A_n(\a,\b)+A_n^H(\a,\b)}{2} = \Theta_n(\a) +
\mathrm{Re}(\Psi_n(\b)), \label{eq:def_ReA}\\
\mathrm{i}\,  \mathrm{Im}(A_n(\a,\b)) &:=& \mathrm{i}\,\frac{A_n(\a,\b)-A_n^H(\a,\b)}{2\mathrm{i}} = \mathrm{i}\,
\mathrm{Im}(\Psi_n(\b)), \label{eq:def_ImA}
\end{eqnarray}
where
\begin{eqnarray}
\mathrm{Re}(\Psi_n(\b)) &=&
\frac{1}{2}(\Psi_n(\b)+\Psi_n^T(\b))=E_n(\b), \label{eq:def_E_nb}
%
%\mathrm{i}\,  \mathrm{Im}(\Psi_n(\b)) &=& \Psi_n(\b)-E_n(\b),
\end{eqnarray}
since by definition, the diffusion term $\Theta_n(\a)$ is a
Hermitian matrix and does not contribute to the skew-Hermitian
part of $A_n(\a,\b)$. Notice also that $E_n(\b)=0$ if
$\mathrm{div}(\b)=0$.
%-----------------------------------------------------------------
More in general, the matrix $\mathrm{Re}(A_n(\a,\b))$ is symmetric and positive definite
whenever $\lambda_{\min}(\Theta_n(\a)) \ge \rho(E_n(\b))$. Indeed, without the condition
$\mathrm{div}(\b)\ge 0$, the matrix $E_n(\b)$ does not have a definite sign in general:
in fact, a specific analysis of the involved constants is required in order to guarantee the nonnegativity of
the term $E_n(\b)$.  Moreover,
the Lemma below allows to obtain further information regarding such a spectral assumption,
where $\|\cdot\|_2,\ \|\cdot\|_\infty$ indicate both the usual vector norms and the induced matrix
norms.\newline
\begin{lemma} \emph{\cite{RT-TR-2008}} \label{lemma:normaE} %Dim indipendente da mesh!!!
Let $\{E_n(\b)\}$ be the matrix sequence defined
according to \emph{(\ref{eq:def_E_nb})}.
%as
%\[
%E_n(\b)=\frac{1}{2}(\Psi_n(\b)+\Psi_n^T(\b)).
%\]
%
%and let us assume that $| \mathrm{div} (\b)|$ is bounded. Then, it holds
%
Under the assumptions in \emph{(\ref{eq:ipotesi_coefficienti})}, then we find
\(
\|E_n(\b)\|_2  \le \|E_n(\b)\|_\infty \le Ch^2,
\)
with $C$ absolute positive constant only depending on $\b(\mathbf{x})$ and $\Omega$.
The claim holds both in the case in which the matrix elements in
\emph{(\ref{eq:def_psi})} are evaluated exactly and whenever a quadrature formula
with error $O(h^2)$ is considered for approximating the  involved integrals. \newline
\end{lemma}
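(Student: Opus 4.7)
The starting point is to rewrite the symmetric part of $\Psi_n(\b)$ in a form where the factor $\mathrm{div}(\b)$ appears explicitly. For any two internal basis functions $\varphi_i,\varphi_j\in V_h$, which vanish on $\partial\Omega$, I would use the product rule $\varphi_j\nabla\varphi_i+\varphi_i\nabla\varphi_j=\nabla(\varphi_i\varphi_j)$ together with integration by parts to get
\[
(E_n(\b))_{i,j}=-\frac{1}{2}\int_\Omega \b\cdot\nabla(\varphi_i\varphi_j)=\frac{1}{2}\int_\Omega \mathrm{div}(\b)\,\varphi_i\varphi_j .
\]
This identity is the heart of the argument: it turns the skew/symmetric cancellation into an explicit mass-matrix type expression, weighted by the smooth function $\mathrm{div}(\b)$.

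Next, since $E_n(\b)$ is real symmetric, $\|E_n(\b)\|_1=\|E_n(\b)\|_\infty$, so $\|E_n(\b)\|_2\le \sqrt{\|E_n(\b)\|_1\|E_n(\b)\|_\infty}=\|E_n(\b)\|_\infty$; this disposes of the first inequality for free. To estimate $\|E_n(\b)\|_\infty$, I would then exploit the structure of the linear Lagrange basis: $|\varphi_i|\le 1$ and $\mathrm{supp}(\varphi_i)$ is the union of the (at most a fixed number of) triangles meeting at node $i$, whose total area is $O(h^2)$. Since $\mathrm{div}(\b)\in\mathbf{C}^0(\overline\Omega)$ is bounded, each entry satisfies $|(E_n(\b))_{i,j}|\le C'\,|\mathrm{supp}(\varphi_i)\cap\mathrm{supp}(\varphi_j)|=O(h^2)$. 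Summing along row $i$ involves only a uniformly bounded number of nonzero entries (the neighbours of node $i$ in the equi-lateral triangulation), which yields $\|E_n(\b)\|_\infty\le Ch^2$ with $C$ depending only on $\|\mathrm{div}(\b)\|_\infty$ and on the shape-regularity of $\mathcal{T}_h$ (hence only on $\b$ and $\Omega$).

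For the quadrature-based version, I would not reapply integration by parts (which is unavailable at the discrete level) but instead argue entrywise. Writing the quadrature-approximated $\tilde E_n(\b)$ as $E_n(\b)+R_n$, the perturbation $R_n$ collects, element by element, errors of size $O(h^2)$ relative to integrals whose natural scale is $O(h^2)$ (per element, each factor $\varphi_i$ or $\b\cdot\nabla\varphi_j$ contributes a power of $h$ and an area $O(h^2)$); in view of the bounded number of nonzero contributions per row this produces a row sum that is itself $O(h^2)$ (in fact of higher order), so the same bound persists.

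The only genuinely delicate step is the first one: one must justify applying the identity cell-by-cell through the standard assembling procedure, so as to be sure that no boundary contributions survive for the internal basis functions and that the quadrature errors can be reorganized into a symmetric remainder of the stated magnitude. Once this is secured, the $\|\cdot\|_\infty$ estimate and the passage to $\|\cdot\|_2$ via symmetry are routine.
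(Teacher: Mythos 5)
Your treatment of the exact-integration case is correct and is precisely the route of the cited reference \cite{RT-TR-2008}: symmetrize, use $\varphi_j\nabla\varphi_i+\varphi_i\nabla\varphi_j=\nabla(\varphi_i\varphi_j)$ and integrate by parts (legitimate since internal basis functions vanish on $\partial\Omega$) to get $(E_n(\b))_{i,j}=\frac12\int_\Omega \mathrm{div}(\b)\,\varphi_i\varphi_j$, then bound the row sums via $|\varphi_i|\le 1$, $|\mathrm{supp}\,\varphi_i|=O(h^2)$ and the uniformly bounded number of neighbours per node; the reduction of $\|\cdot\|_2$ to $\|\cdot\|_\infty$ by symmetry is standard.

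The quadrature part, however, is justified by an incorrect scaling. The factor $\b\cdot\nabla\varphi_j$ is $O(h^{-1})$, not $O(h)$, so the elementary integrals $\int_K(\b\cdot\nabla\varphi_i)\varphi_j$ have natural size $O(h)$, not $O(h^2)$; even the symmetrized elementary integral $-\frac12\int_K\b\cdot\nabla(\varphi_i\varphi_j)$ is only $O(h)$ per triangle (indeed $\nabla\varphi_i+\nabla\varphi_j=-\nabla\varphi_k$ on $K$, so no intra-element cancellation occurs), and the $O(h^2)$ size of the assembled entry arises only from cancellation of the inter-element boundary terms, i.e., from the \emph{global} integration by parts. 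Thus ``error $O(h^2)$ relative to an $O(h^2)$ integral'' is not a valid accounting, and one must check that elementwise quadrature does not destroy the cancellation. The repair is short but necessary: since quadrature is linear, $\tilde E_n$ is the quadrature of the symmetrized integrand $-\frac12\,\b\cdot\nabla(\varphi_i\varphi_j)$, in which $\nabla(\varphi_i\varphi_j)$ is linear on each $K$; freezing $\b$ at the barycentre makes this part linear, hence integrated exactly by the barycentric (or nodal) rule, while the residual $-\frac12(\b-\b(b_K))\cdot\nabla(\varphi_i\varphi_j)$ is $O(h)\cdot O(h^{-1})=O(1)$ pointwise, so its integral and its quadrature are each $O(|K|)=O(h^2)$. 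Summing the bounded number of such contributions per row gives $\|\tilde E_n(\b)-E_n(\b)\|_\infty=O(h^2)$ and the claim survives; but the argument needs exactness on linears plus the ${\bf C}^1$ regularity of $\b$, not the per-factor power count you wrote.
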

\par Hereafter, we will denote by $\{A_n(\a,\b)\}$, $n=n(h)$, the
matrix sequence associated to a family of meshes
$\{\mathcal{T}_h\}$, with decreasing finesse parameter $h$. As
customary, the whole preconditioning analysis will refer to a
matrix sequence instead to a single matrix, since the goal is to
quantify the difficulty of the linear system resolution in
relation to the accuracy of the chosen approximation scheme.
%---------------------Preliminare a GMRES------------------------------------------
%-------------------------------------------------------------------------------
%-------------------------------------------------------------------------------
\section{Spectral analysis and clustering properties in the case of structured uniform meshes} \label{sez:clustering}
The aim of this section is to analyze the spectral properties of
the preconditioned matrix sequences $\{P_n^{-1}(\a)(A_n(\a,\b))\}$
in the case of some special domains $\Omega$ partitioned with
structured uniform meshes, so that spectral tools derived from
Toeplitz theory \cite{BS-Springer-1998,S-LAA-1998} can be
successfully applied. The applicative interest of the considered
type of domains will be motivated in the short.
\par Indeed, let $f$ be a $2-$variate
Lebesgue integrable function defined over  $\mathcal{D}=(-\pi,
\pi]^2$. By referring to  the Fourier coefficients of this
function $f$, called generating function,
\[
a_j = \frac{1}{4\pi^2} \int_\mathcal{D} f(s)e^{-\mathrm{i}<j,\, s>} ds, \quad \mathrm{i}^2=-1, \ j=(j_1,j_2)\in \mathbb{Z}^2,
\]
with $<j,\, s> =j_1s_1+j_2s_2$, one can build the sequence of
Toeplitz matrices $\{T_n(f)\}$. The matrix $T_n(f) \in
\mathbb{C}^{n \times n}$ is said to be the Toeplitz matrix of
order $N=(N_1,N_2)$, $n=N_1N_2$, generated by $f$.\par Now, the
spectral properties of the matrix sequence $\{T_n(f)\}$
%and of related preconditioned sequences
are completely understood and characterized in terms of the
underlying generating functions (see, e.g.,
\cite{BS-Springer-1998} for more details). In the following, with
respect to the latter claim, we will refer to the notion of
equivalent generating functions. In such a respect, we claim  that
two nonnegative function $f$ and $g$ defined over a domain
$\mathcal{D}$ are equivalent if and only if $f=O(g)$ and $g=O(f)$,
where $\alpha=O(\beta)$ means that there exists a pure positive
constant $c$ such that $\alpha \le c \beta$ almost everywhere on
$\mathcal{D}$. \par
%-------------------------------------------------------------------------------
%-------------------------------------------------------------------------------
\begin{figure}
\centering \epsfig{file=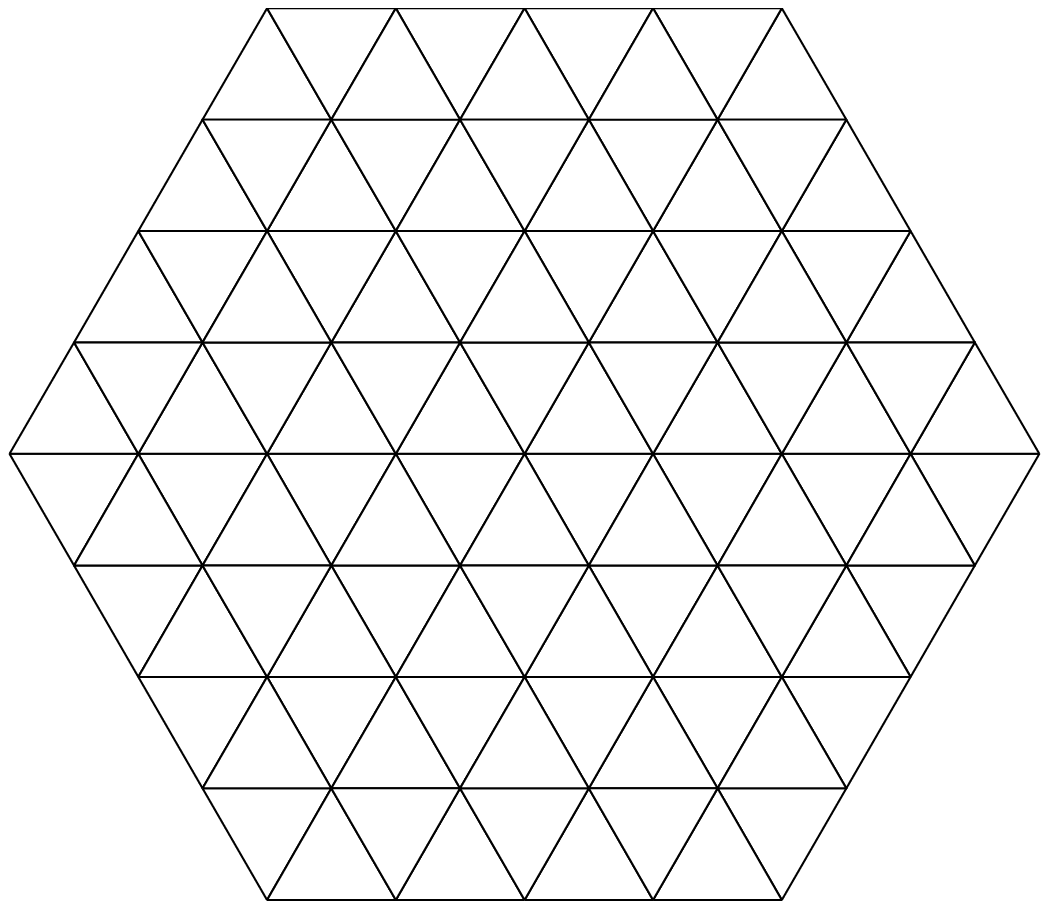,height=4cm} \
\epsfig{file=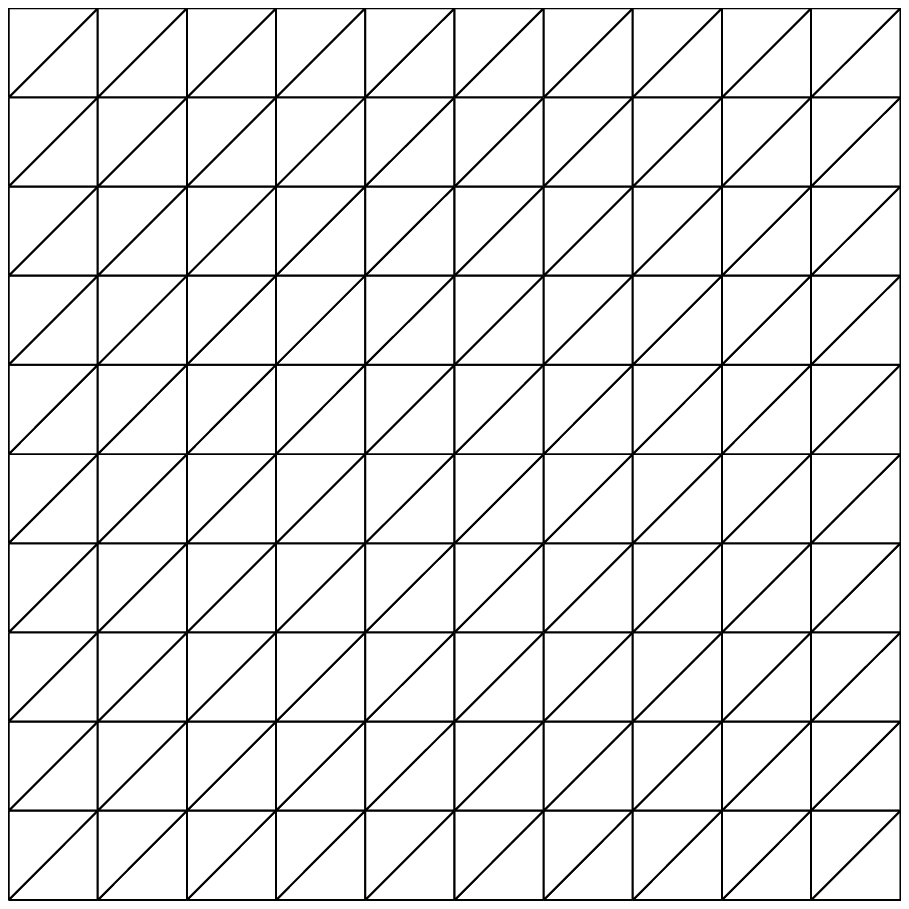,height=4cm}\\
(a)\hskip 5cm (b)
%\vskip -1.5cm
\caption{Uniform structured mesh with equilateral or rectangular
triangles.} \label{fig:esagono_esagoni}
\end{figure}
%-------------------------------------------------------------------------------
%-------------------------------------------------------------------------------
The main motivation of this paper lies in the analysis of the
template case reported in Figure \ref{fig:esagono_esagoni}.a,
where we consider a partition into equilateral triangles. It is
self-evident that such a problem represents just an academic
example. However, it is a fact that a professional mesh generator
will locally produce a partitioning, which is ``asymptotically''
similar to the one reported in the figure. \par Our goal is to
prove the PCG optimality with respect to the diffusion problem
approximation, i.e., the number of iterations for reaching the
solution, within a fixed accuracy, can be bounded from above by a
constant independent of the dimension $n=n(h)$. Some additional
results about PGMRES convergence properties in the case of the
convection-diffusion problem approximation are also reported, both
in the case of the hexagonal domain $\Omega$ with a structured
uniform mesh as in Figure \ref{fig:esagono_esagoni}.a and in the
case $\Omega=(0,1)^2$ with a structured uniform mesh as in Figure
\ref{fig:esagono_esagoni}.b, %\ref{fig:mesh_strutturata_uniforme},
so extending previous results proved in \cite{RT-TR-2008} with
respect to the PHSS method \cite{BGST-NM-2005}. \par The spectral
analysis makes reference to the following definition. \par
\begin{definition} \emph{\cite{T-LAA-1996}} \label{def:cluster}
Let $\{A_n\}$ be a sequence of Hermitian matrices of increasing dimensions
$n$. The sequence $\{A_n\}$ is clustered at $p$ in the eigenvalue sense if for any $\varepsilon
>0$, $\# \left\{i \left.\right|\lambda_i(A_n) \notin (p - \varepsilon, p
+ \varepsilon)\right\}=o(n)$.
The sequence $\{A_n\}$ is properly (or strongly) clustered at $p$
if for any $\varepsilon> 0$ the number of the eigenvalues of $A_n$
not belonging to $(p - \varepsilon, p + \varepsilon)$ can be
bounded by a pure constant eventually depending on $\varepsilon$,
but not on $n$. In the case of non-Hermitian matrices, the same definition can be extended by
considering  the complex disk $D(p,\varepsilon)$, instead of the interval $(p - \varepsilon, p + \varepsilon)$.
%
%Let $\{A_n\}$ be a sequence of matrices of increasing dimensions
%$n$, and let $\lambda_j(A_n)$, $j=1,\ldots,n$, denote the eigenvalues of $A_n$.\newline
%The sequence $\{A_n\}$ is clustered at $p\in \mathbb{C}$ if
%$\# \left\{i : \lambda_i(A_n) \notin D(p,\varepsilon)\right\}=o(n)$, where
%$D(p,\varepsilon)=\{z \in \mathbb{C} : |z-p|<\varepsilon\}$.
%The sequence $\{A_n\}$ is properly (or strongly) clustered at $p\in \mathbb{C}$
%if for any $\varepsilon> 0$ the number of the eigenvalues of $A_n$
%not belonging to $D(p,\varepsilon)$ can be
%bounded by a pure constant eventually depending on $\varepsilon$,
%but not on $n$. Of course if every $A_n$ has, at least definitely (i.e. for n large enough), only real eigenvalues, then $p$
%has to be real and the disk $D(p,\varepsilon)$ reduces to the interval $(p - \varepsilon, p + \varepsilon)$.
\end{definition}
\subsection{Diffusion Equations} \label{sez:diffusion_equation} % PCG - Equilateral triangles
\par
We start by considering the simpler diffusion problem, i.e., the
analysis concerns the Hermitian matrix sequences $\{A_n(\a)\}$,
$A_n(\a)=\Theta_n(\a)$. \par Due to the very special choice of the
domain $\Omega$, the matrices $A_n(\a)=\Theta_n(\a)$ arising in
the case $\a\equiv 1$ with structured meshes as in Figure
\ref{fig:esagono_esagoni}.a result to be a proper projection of
Toeplitz matrices generated by the function
$\tilde{f}(s_1,s_2)=\sqrt{3}(6-2\cos(s_1)-2\cos(s_2)-2\cos(s_1+s_2)){/3}$,
$(s_1,s_2) \in \mathcal{D}=(-\pi, \pi]^2$, related to a bigger
parallelogram shaped domain $\Omega_N$ containing $\Omega$ (see
Figure \ref{fig:esagono_esagoni_parallelogrammi}), i.e., \(
A_n(1)= \Pi\, T_N(\tilde{f}) \, \Pi^T \), with $N$ number of the
internal nodes. Here, $\Pi \in \mathbb{R}^{n\times N}$, $n\le N$
is a proper projection matrix simply cutting those rows (columns)
referring to nodes belonging to $\Omega_N$, but not to $\Omega$.
Thus, the matrix is full rank, i.e., $\mathrm{rank}(\Pi)=n$, and
$\Pi \, \Pi^T=I_n$.  \par In the same way, we can also consider
the Toeplitz matrix with the same generating function arising when
considering a smaller parallelogram shaped domain
$\Omega_{\tilde{N}}$ contained in $\Omega$, i.e., \(
T_{\tilde{N}}(\tilde{f})  = \tilde{\Pi} \, A_n(1)  \,
\tilde{\Pi}^T \), with $\tilde{\Pi} \in
\mathbb{R}^{{\tilde{N}}\times n}$, ${\tilde{N}}\le n$  proper
projection matrix and such that
$\mathrm{rank}(\tilde{\Pi})={\tilde{N}}$, and $\tilde{\Pi} \,
\tilde{\Pi}^T=I_{\tilde{N}}$.  \par
%
%For the basic case we show how the �embedding argument� both in the continuous
%(operator and domain) and in the discrete (matrix representation) settings is the key
%point in reducing the spectral and preconditioning analyses to the model case where
%the domain is a square (a hypercube in the d-level setting).
%
These embedding arguments allow to bound, according to the min-max principle (see \cite{ST-SIMAX-2003}
for more details on the use of this proof technique), the minimal eigenvalue of the matrices $A_n(1)$ as follows
\begin{equation} \label{eq:lmin_proiezione}
% Vedi condizionamneto A_n(1)
\lambda_{\min} (T_{N}(\tilde{f})) \le \lambda_{\min} (A_n(1)) \le \lambda_{\min} (T_{\tilde{N}}(\tilde{f})) .
\end{equation}
%-------------------------------------------------------------------------------
%-------------------------------------------------------------------------------
\begin{figure}
\centering
\epsfig{file=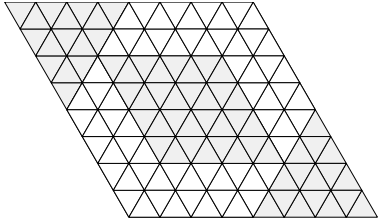,height=4cm}
%\vskip -1.5cm
\caption{Uniform structured mesh on $\Omega$, $\Omega_N$, $\Omega_{\tilde{N}}$.}
\label{fig:esagono_esagoni_parallelogrammi}
\end{figure}
%-------------------------------------------------------------------------------
%-------------------------------------------------------------------------------
%------------------------------------------------------------------------------------------------------------------
A first technical step in our spectral analysis concerns
relationships between this generating function $\tilde{f}$ and the
more classical generating function
$f(s_1,s_2)=4-2\cos(s_1)-2\cos(s_2)$ arising in the case of FE
approximations on a square $\Omega=(0,1)^2$ with Friedrichs-Keller
meshes (see Figure \ref{fig:esagono_esagoni}.b), or standard FD
discretizations.\par It can be easily observed that these two
function are equivalent, in the sense of the previously reported
definition, since we have
\[
\frac{\sqrt{3}}{3} \, f \le \tilde{f} \le \sqrt{3}\, f \quad \textrm{on } \mathcal{D}=(-\pi, \pi]^2.
\]
Thus, because $T_n(\cdot)$ is a matrix-valued linear positive
operator (LPO) for every $n$ \cite{Serra-CAMA-2000}, the Toeplitz
matrix sequences generated by this equivalent functions result to
be spectrally equivalent, i.e., for any $n$
\begin{equation} \label{eq:rel_toeplitz}
\frac{\sqrt{3}}{3}\, T_n(f) \le T_n(\tilde{f}) \le \sqrt{3} \, T_n(f);
\end{equation}
furthermore, due to the strict positivity of every $T_n(\cdot)$
(see \cite{Serra-CAMA-2000} for the precise definition), since $
\tilde{f} -\frac{\sqrt{3}}{3}$ is not identically zero, we have
that
\[
 T_n(\tilde{f}) -\frac{\sqrt{3}}{3}\, T_n(f)
\]
is positive definite, and since $\sqrt{3}\, f - \tilde{f}$ is not
identically zero, we find that
\[
\sqrt{3} \, T_n(f)- T_n(\tilde{f})
\]
is also positive definite. Here, we are referring to the standard
ordering relation between Hermitian matrices, i.e., the notation
$X \ge Y$, with $X$ and $Y$ Hermitian matrices, means that $X - Y$
is nonnegative definite. \par An interesting remark pertains to
the fact that the function $\tilde{f}$ is the most natural one
from the FE point of view, since no contribution are lost owing to
the gradient orthogonality as instead in the case related to $f$;
nevertheless its relationships with the function $f$ can be fully
exploited in performing the spectral analysis. More precisely,
from (\ref{eq:lmin_proiezione}) and (\ref{eq:rel_toeplitz}) and
taking into account (see e.g.  \cite{S-LAA-1998}) that
\[
 \lambda_{\min}(T_n(f)) =8\sin^2\left({\pi\over 2}h\right),\ \  h={1\over n+1},
\]
we deduce
\begin{equation} \label{eq:lmin_toeplitz_bottom}
\lambda_{\min}(A_n(1)) \ge \lambda_{\min}(T_N(\tilde{f})) \ge \frac{\sqrt{3}}{3} \, \lambda_{\min}(T_N(f))=
\frac{\sqrt{3}}{3}8\sin^2\left({\pi\over 2}h_N\right) \sim h_N^2.
\end{equation}
Following the very same reasoning, we also find that
\begin{equation} \label{eq:lmin_toeplitz_top}
\lambda_{\min}(A_n(1)) \le \lambda_{\min}(T_{\tilde{N}}(\tilde{f})) \le {\sqrt{3}} \, \lambda_{\min}(T_{\tilde{N}}(f))
= 8 {\sqrt{3}} \sin^2\left({\pi\over 2}h_{\tilde{N}}\right)\sim  h_{\tilde{N}}^2.
\end{equation}
Since, by the embedding argument, both $N$ and $\tilde N$ are asymptotic to $n$, it follows that
$\lambda_{\min}(A_n(1)) \sim h^2$.
%
% RELAZIONI H H_N e H_{\tilde{N}}! (rispetto a esagoni h_{\tilde{N}}=h_N)
% Ci sono vincoli, quale h per la tau? !!!!!!!!!!!!!!!!!!!!
%
%
Finally, following the same analysis for the maximal eigenvalue, we find
\[
\frac{\sqrt{3}}{3} \, \lambda_{\max}(T_N(f)) \le \lambda_{\max}(A_n(1)) \le  {\sqrt{3}} \, \lambda_{\max}(T_{\tilde{N}}(f)),
\]
where, by  \cite{BS-Springer-1998}, we know that
\[
\lim_{n\rightarrow\infty}  \lambda_{\max}(T_n(f)) =\max_{\mathcal{D}} f=8,
\]
and hence the spectral condition number of $A_n(1)$ grows as $h^{-2}$ i.e.
\begin{equation} \label{eq:cond-an1}
K_2(A_n(1)) \sim  h^{-2},\ \ \ A_n(1)=\Theta_n(1),
\end{equation}
where the constant hidden in the previous relation is mild and can be easily estimated.

It is worth stressing  that the same matrix $\Pi$ can also be
considered in a more general setting. In fact, the matrix sequence
$\{A_n(\a)\}$ can also be defined as $\{A_n(\a)\}= \{\Pi\, A_N(\a)
\, \Pi^T\}$, since again each internal node in $\Omega$ is a
vertex of the same constant number of triangles. Therefore, by
referring to projection arguments, the spectral analysis can be
equivalently performed both on the matrix sequence $\{A_n(\a)\}$
and $\{A_N(\a)\}$. \par No matter about this choice, we make use
of a second technical step, which is based on standard Taylor's
expansions.
% and according to the following notation of scaled
%matrix $X^*=D_n^{-\frac{1}{2}}(a)XD_n^{-\frac{1}{2}}(a)$ of a
%given matrix $X$.
\par
\begin{lemma}\label{lemma:Taylor_expansion_esagoni}
Let  $\a \in {\bf C}^2(\overline \Omega)$,  with $\a(\mathbf{x})
\ge a_0 >0$ and $\Omega$ hexagonal domain partitioned as in Figure
\ref{fig:esagono_esagoni}.a. For any $p$ such that
$(\Theta_n(\a))_{s,s-p}\ne 0$ there exists a proper
$(x^*_p,y^*_p)$  such that the Taylor's expansions centered at a
proper $(x^*_p,y^*_p)$ have the form
\begin{eqnarray}
%-------------------------------------------------------------------------
(\Theta_n(\a))_{s,s-p}  & = & a(x^*_p,y^*_p) (A_n(1))_{s-p} +  h^2 D_p + o(h^2), \label{eq:exp.a}\\
%-------------------------------------------------------------------------
(\Theta_n(\a))_{s,s} & = & a(x^*_p,y^*_p) (A_n(1))_{s,s}+h B_p  +h^2 C_p +o(h^2), \label{eq:exp.b}\\
%-------------------------------------------------------------------------
(\Theta_n(\a))_{s-p,s-p} & = &  a(x^*_p,y^*_p) (A_n(1))_{s,s}-h
B_p
         +h^2  C_p +o(h^2), \label{eq:exp.c}
%-------------------------------------------------------------------------
\end{eqnarray}
where $B_p, C_p$ and $D_p$ are constants independent of $h$.
\end{lemma}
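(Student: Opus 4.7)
The proof rests on two elementary observations. First, on every triangle $K$ of the mesh the basis functions are affine, so $\nabla\varphi_i$ is constant on $K$ and
\[
(\Theta_n(\a))_{i,j} = \sum_{K\,\subset\,\mathrm{supp}(\varphi_i\varphi_j)}\!\!(\nabla\varphi_j\cdot\nabla\varphi_i)|_K\ \int_K \a(x,y)\,dx\,dy,
\]
reducing the analysis to integrals of $\a$ over the two triangles sharing the edge $(s,s-p)$ (for the off-diagonal entry) or over the six-triangle hexagonal star around $s$, respectively $s-p$ (for the two diagonal entries). Second, since all triangles in $\mathcal T_h$ are congruent equilaterals of side $h$, the geometric constants $(\nabla\varphi_j\cdot\nabla\varphi_i)|_K \cdot |K|$ are exactly the entries of $A_n(1)$; in particular $(A_n(1))_{s,s}$ depends only on the (fixed) valence six of an interior node and $(A_n(1))_{s,s-p}$ depends only on the edge direction labelled by $p$.

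I would choose $(x^*_p,y^*_p)$ as the midpoint of the edge between $s$ and $s-p$ and Taylor expand $\a$ at this point to second order. For $(\Theta_n(\a))_{s,s-p}$ the support $T_1^{(p)}\cup T_2^{(p)}$ is a rhombus centrally symmetric about $(x^*_p,y^*_p)$; hence the linear Taylor terms integrate to zero, the constant term reproduces $\a(x^*_p,y^*_p)(A_n(1))_{s,s-p}$, and the residue $h^2 D_p + o(h^2)$ comes from the Hessian of $\a$ paired with the $O(h^4)$ second moments of the rhombus. For $(\Theta_n(\a))_{s,s}$ and $(\Theta_n(\a))_{s-p,s-p}$ the supports are the hexagonal stars around $s$ and $s-p$, with centroids at $s$ and $s-p$, that is, displaced from $(x^*_p,y^*_p)$ by $\mp (h/2)\,\hat{e}_p$ along the unit edge vector $\hat{e}_p$. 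Splitting $(x,y)-(x^*_p,y^*_p)$ into (centroid-to-point) plus (point-to-star-centroid), the first piece integrates to zero by the rotational symmetry of the hexagonal star, while the second piece contributes $\pm h B_p$ with $B_p = \tfrac12\,\nabla\a(x^*_p,y^*_p)\cdot\hat{e}_p$ times the fixed star-area constant. The $h^2 C_p$ term then arises from the Hessian of $\a$ against the common second moments of the two stars, identical by congruence, which explains why $C_p$ is shared between (\ref{eq:exp.b}) and (\ref{eq:exp.c}); the $o(h^2)$ remainder is controlled by the modulus of continuity of the Hessian of $\a$, finite since $\a\in C^2(\overline\Omega)$.

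The delicate point is the bookkeeping that the zeroth-order coefficients collapse exactly into $\a(x^*_p,y^*_p)(A_n(1))_{s,s-p}$ and $\a(x^*_p,y^*_p)(A_n(1))_{s,s}$, rather than into unrelated geometric constants. This is where the uniformly equilateral mesh is essential: every interior node carries the same local six-triangle stencil up to rigid motion, so the geometric factors emerging from the $\a\equiv 1$ step match the stencil values of $A_n(1)$ entry by entry. Once this identification is made, the three expansions follow from a routine two-variable Taylor development combined with the centroid-symmetry of the rhombus and of the hexagonal star; the uniform boundedness of $B_p,C_p,D_p$ in $h$ is then immediate.
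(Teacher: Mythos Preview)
Your argument is correct and follows essentially the same route as the paper: a second-order Taylor expansion of $\a$ about a judiciously chosen centre, combined with the reflection/rotation symmetry of the equilateral stencil to kill the first-order terms (off-diagonal) or to produce the matched $\pm hB_p$ contributions (diagonal). The paper phrases the cancellation via the barycentric quadrature identity $\int_K(\text{affine})=|K|\cdot(\text{value at }b_K)$ and then observes that the barycenters are placed symmetrically about the chosen centre $*$ (Figure~\ref{fig:simmetrie}); you phrase it via direct central symmetry of the rhombus and rotational symmetry of the hexagonal star, which is the same cancellation seen integrally rather than pointwise. The identification of the zeroth-order constants with the entries of $A_n(1)$ and the sharing of $C_p$ between (\ref{eq:exp.b}) and (\ref{eq:exp.c}) are handled exactly as in the paper, so there is no substantive difference.
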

\begin{proof}
\begin{figure}
\centering %\vskip -0.5cm
\epsfig{file=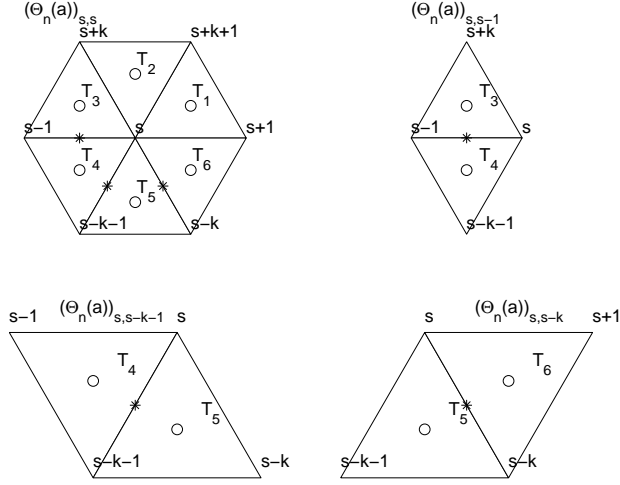,clip=, width=10cm} \\
\vskip -1.0cm \caption{Centers of Taylor's expansion.}
\label{fig:simmetrie}
\end{figure}
In the cases at hand, the validity of this claim is just a direct
check: the key point relies in the symmetry properties induced by
the structured uniform nature of the considered meshes, both in
the case of $\Omega$ and $\Omega_N$. Hereafter, we give some
detail with respect to the case of the hexagonal domain
partitioned as in Figure \ref{fig:esagono_esagoni}.a,
%as highlighted in Figure \ref{fig:simmetrie},
see \cite{ST-NA-2001} for the proof in the case $\Omega=(0,1)^2$.
Thus, let's consider the Taylor's expansion centered at a proper
$(x^*_p,y^*_p)$. By calling $a^*=a(x^*_p,y^*_p)$ and similarly
denoting the derivatives of the diffusion coefficient, it holds
that
\begin{eqnarray*}
\int_K a(x,y) &=& \int_K a^*+(x-x^*) a^*_x+(y-y^*) a^*_y \\
&& \quad +\frac{1}{2} (x-x^*)^2 a^*_{xx} +\frac{1}{2} (y-y^*)^2
a^*_{yy} +(x-x^*)(y-y^*)
a^*_{xy}+o(\tau^2) \\
&=& |K| (a^*+(x-x^*)_{|b_K} a^*_x+(y-y^*)_{|b_K} a^*_y )\\
&& \quad  + \int_K \frac{1}{2} (x-x^*)^2 a^*_{xx} +\frac{1}{2}
(y-y^*)^2 a^*_{yy} +(x-x^*)(y-y^*) a^*_{xy}+o(\tau^2) \\
&=& |K| (a^*+(x-x^*)_{|b_K} a^*_x+(y-y^*)_{|b_K} a^*_y \\
&& \quad  +  \frac{1}{2} (x-x^*)^2_{|b_K} a^*_{xx} +\frac{1}{2}
(y-y^*)^2_{|b_K} a^*_{yy} +(x-x^*)(y-y^*)_{|b_K}
a^*_{xy})+o(\tau^2)
\end{eqnarray*}
with $\tau=\| [ x-x^,y-y^* ]^T \|$ and $b_K$ denoting the
barycenter of the triangle $K$. By referring to the assembling
procedure, we will choose as center of the related Taylor's
expansion in the case of Figure
\ref{fig:simmetrie}.b-\ref{fig:simmetrie}.d the point marked by
$*$. The symmetric position of the involved triangle barycenters
(marked by $\circ$), with respect to $*$ allows to claim
(\ref{eq:exp.a}). The same holds true for (\ref{eq:exp.b}) by
considering each pertinent point $*$ in Figure
\ref{fig:simmetrie}.a and, analogously, for (\ref{eq:exp.c}).
\end{proof}
%-------------------------------------------------------------------------------
%
\begin{lemma}\label{lemma:Taylor_expansion} \emph{\cite{ST-NA-2001}}
%Let  $\a \in {\bf C}^2(\overline \Omega)$,  with $\a(\mathbf{x})
%\ge a_0 >0$. If for any $p$ such that $(\Theta_n(\a))_{s,s-p}\ne
%0$ there exists a proper $(x^*_p,y^*_p)$  such that the Taylor's
%expansions centered at $(x^*_p,y^*_p)$ have the form
%
%\begin{eqnarray*}
%%-------------------------------------------------------------------------
%(\Theta_n(\a))_{s,s-p}  & = & a(x^*_p,y^*_p) (A_n(1))_{s-p} +  h^2 D_p + o(h^2), \\
%%-------------------------------------------------------------------------
%(\Theta_n(\a))_{s,s} & = & a(x^*_p,y^*_p) (A_n(1))_{s,s}+h B_p  +h^2 C_p +o(h^2), \\
%%-------------------------------------------------------------------------
%(\Theta_n(\a))_{s-p,s-p} & = &  a(x^*_p,y^*_p) (A_n(1))_{s,s}-h
%B_p
%         +h^2  C_p +o(h^2),
%%-------------------------------------------------------------------------
%\end{eqnarray*}
%
%where $B_p, C_p$ and $D_p$ are constants,
Let's considering the following notation of scaled matrix
$X^*=D_n^{-\frac{1}{2}}(a)XD_n^{-\frac{1}{2}}(a)$ of a given
matrix $X$. Under the assumptions of \emph{Lemma
\ref{lemma:Taylor_expansion_esagoni}} the following representation
holds true
\begin{equation}\label{quadratico}
%A_n^*(a)=A_n(1)+h^2 \Theta_n + o(h^2) E_n,
\Theta_n^*(\a)=D_{n}^{-1/2}(\a)\Theta_n(\a) D_{n}^{-1/2}(\a)
=\Theta_n(1) +h^2 F_n(\a)+o(h^2)G_n(\a)
\end{equation}
where $F_n$ and $G_n$ are uniformly bounded in spectral norm and
have the same pattern as $\Theta_n^*(\a)$.
\par
%
%symmetric bounded two-level band matrices.
%
\end{lemma}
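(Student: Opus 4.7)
My plan is to derive (\ref{quadratico}) entry-by-entry by combining the Taylor expansions of Lemma \ref{lemma:Taylor_expansion_esagoni} with the definition of $D_n(\a)$, exploiting the fact that the $\pm h B_p$ contributions cancel in the diagonal scaling. I would begin with the main diagonal, which is trivial: by definition $(D_n(\a))_{s,s}=(\Theta_n(\a))_{s,s}/(\Theta_n(1))_{s,s}$, so $(\Theta_n^*(\a))_{s,s}=(\Theta_n(1))_{s,s}$ with no remainder, and the $(s,s)$ entries of $F_n(\a)$ and $G_n(\a)$ can be taken to be zero.

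For an off-diagonal position $(s,s-p)$ lying in the stencil of $\Theta_n(\a)$, I would write
\[
(\Theta_n^*(\a))_{s,s-p} = \frac{(\Theta_n(\a))_{s,s-p}}{\sqrt{(D_n(\a))_{s,s}\,(D_n(\a))_{s-p,s-p}}}
\]
and substitute the three expansions (\ref{eq:exp.a})--(\ref{eq:exp.c}), all centered at the common point $(x^*_p,y^*_p)$ supplied by Lemma \ref{lemma:Taylor_expansion_esagoni}. Setting $a^*=\a(x^*_p,y^*_p)$ and $d=(\Theta_n(1))_{s,s}$ (which is constant along the diagonal thanks to the uniformity of the mesh), the two diagonal factors read $a^* \pm h B_p/d + h^2 C_p/d + o(h^2)$. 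Multiplying them, the $\pm h B_p/d$ terms cancel exactly, so the product equals $(a^*)^2$ plus an $O(h^2)$ contribution, and the square root is $a^* + O(h^2)$. Dividing the numerator $a^*(\Theta_n(1))_{s,s-p} + h^2 D_p + o(h^2)$ by this quantity yields $(\Theta_n(1))_{s,s-p}$ plus an $h^2$ term of the required shape and an $o(h^2)$ remainder, which produces (\ref{quadratico}) once the $h^2$ coefficients are collected into $F_n(\a)$ and the remainders into $G_n(\a)$.

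The entries assembled in this way vanish outside the stencil of $\Theta_n(\a)$, so $F_n(\a)$ and $G_n(\a)$ automatically inherit the pattern of $\Theta_n^*(\a)$. Uniform boundedness of $\|F_n(\a)\|_2$ and $\|G_n(\a)\|_2$ then follows from a Gershgorin-type argument: the number of nonzero entries per row is bounded by the (fixed) maximal vertex valence of the mesh, and each individual entry is controlled in absolute value by the ${\bf C}^2$ bound on $\a$ (which renders the constants $B_p,C_p,D_p$ of Lemma \ref{lemma:Taylor_expansion_esagoni} uniformly bounded) together with the coercivity assumption $\a\ge a_0>0$ (which keeps $a^*$ away from zero in the denominator). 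The main technical obstacle I foresee is purely bookkeeping: tracking the dependence of the centers $(x^*_p,y^*_p)$ on both $s$ and $p$, and verifying that the cancellation of the first-order term takes place simultaneously for every $p$ in the stencil. This ultimately rests on the symmetric placement of the Taylor centers guaranteed by Lemma \ref{lemma:Taylor_expansion_esagoni}, while the only algebraic subtlety is to expand the reciprocal square root carefully enough to separate the genuine $O(h^2)$ correction from the lower-order $o(h^2)$ residual, which is exactly what the splitting $h^2 F_n(\a)+o(h^2)G_n(\a)$ in the statement anticipates.
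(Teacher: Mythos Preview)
The paper does not give a self-contained proof of this lemma: it is stated with a direct reference to \cite{ST-NA-2001}, and the argument there (for the rectangular case) proceeds exactly as you outline---an entry-by-entry computation in which the expansions (\ref{eq:exp.b}) and (\ref{eq:exp.c}) are combined so that the $\pm hB_p$ contributions cancel in the product $(D_n(\a))_{s,s}(D_n(\a))_{s-p,s-p}$, leaving only $O(h^2)$ corrections after division by the square root. Your sketch is therefore the same approach as the cited source, and the bookkeeping concern you flag (the dependence of the center $(x_p^*,y_p^*)$ on $p$) is handled precisely by the symmetry argument of Lemma~\ref{lemma:Taylor_expansion_esagoni}, which guarantees that for each fixed $p$ the pair of diagonal expansions share a common center with opposite first-order terms.
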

We are now ready to prove the PCG optimality.
\begin{theorem} \label{teo:clu+se_A_esagoni}
Let $\{A_n(\a)\}$ and $\{P_n(\a)\}$ be the Hermitian positive
definite matrix sequences defined according to
\emph{(\ref{eq:def_A})} and \emph{(\ref{eq:def_P})} in the case of
the hexagonal domain partitioned as in Figure
\ref{fig:esagono_esagoni}.a.
%
%If the coefficient $\a(\mathbf{x})$ is strictly positive and belongs to ${\bf
%C}^2(\overline \Omega)$ and $|\mathrm{div}(\b)|$ is bounded,
%
Under the assumptions in \emph{(\ref{eq:ipotesi_coefficienti})},
the sequence $\{P_n^{-1}(\a) A_n(\a)\}$ is
properly clustered at $1$.
% with respect to the eigenvalues.
Moreover, for any $n$ all the eigenvalues of $P_n^{-1}(\a)
A_n(\a)$ belong to an interval $[d,D]$ well
separated from zero {\em [Spectral equivalence property]}.
\end{theorem}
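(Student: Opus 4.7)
The plan centers on a similarity reduction followed by the Taylor decomposition of Lemma~\ref{lemma:Taylor_expansion}. Conjugating by $D_n^{1/2}(\a)$ gives the similarity
\[
P_n^{-1}(\a)A_n(\a)\;\sim\;A_n^{-1}(1)\,\Theta_n^*(\a),
\]
and a further conjugation by $A_n(1)^{1/2}$ turns this product into a Hermitian positive-definite matrix, so the spectrum of $P_n^{-1}(\a)A_n(\a)$ is real and positive. Applying Lemma~\ref{lemma:Taylor_expansion} rewrites the right-hand side as
\[
I + h^2 A_n^{-1}(1)F_n(\a) + o(h^2)\,A_n^{-1}(1)G_n(\a),
\]
with $F_n(\a),G_n(\a)$ uniformly bounded in spectral norm.

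For the spectral equivalence $[d,D]$ I would bypass the Taylor expansion and use a direct Rayleigh-quotient comparison. The bilinear form $x^H A_n(\a)x=\int_\Omega \a|\nabla u_h|^2$ together with $a_0\le\a\le\|\a\|_\infty$ gives $a_0 A_n(1)\le A_n(\a)\le\|\a\|_\infty A_n(1)$. Since $D_n(\a)_{ii}$ is a convex combination of mean values of $\a$ over the triangles adjacent to node $i$, one has $a_0 I\le D_n(\a)\le\|\a\|_\infty I$, and hence $a_0 A_n(1)\le P_n(\a)\le\|\a\|_\infty A_n(1)$. Combining the two sandwiches places all eigenvalues of $P_n^{-1}(\a)A_n(\a)$ in the fixed interval $[a_0/\|\a\|_\infty,\|\a\|_\infty/a_0]$, uniformly in $n$.

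The proper cluster at $1$ calls for a decomposition of the perturbation $E_n:=h^2 A_n^{-1}(1)F_n(\a)+o(h^2)\,A_n^{-1}(1)G_n(\a)$ as $R_n+N_n$ with $\mathrm{rank}(R_n)$ bounded independently of $n$ and $\|N_n\|_2$ arbitrarily small for $n$ large. The natural approach is to work in the spectral basis of $A_n(1)$ and separate the eigenvalues $\lambda_i(A_n(1))\ge c_0$, on which $h^2/\lambda_i=O(h^2)$ contributes negligibly in norm, from those below $c_0$, which form the $R_n$ piece; multiplication by the bounded $F_n(\a),G_n(\a)$ preserves the splitting, and Weyl's inequality on the Hermitian similarity representative then bounds by $\mathrm{rank}(R_n)$ the number of eigenvalues of $P_n^{-1}(\a)A_n(\a)$ outside any preassigned $\varepsilon$-neighborhood of $1$.

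The hard part is showing that $\mathrm{rank}(R_n)$ can be taken uniform in $n$. The Szeg\H{o}-type spectral distribution of $A_n(1)$, inherited from $T_N(\tilde f)$ via the sandwich (\ref{eq:lmin_proiezione}) and the fact that $\tilde f\ge 0$ vanishes only at the origin with a second-order zero, only gives an $o(n)$ count of eigenvalues of $A_n(1)$ below a fixed threshold, and so on its own would yield merely weak clustering. The upgrade to the proper cluster must exploit the specific structure of $F_n(\a)$ from Lemma~\ref{lemma:Taylor_expansion_esagoni}, in particular the symmetry-induced cancellations produced by the common choice of expansion centers $(x_p^*,y_p^*)$ across the triples (\ref{eq:exp.a})--(\ref{eq:exp.c}); these cancellations should make $A_n^{-1}(1)F_n(\a)$ act nontrivially only on a subspace whose dimension is bounded independently of $n$. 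This is the step that genuinely uses the structured hexagonal mesh and is where I expect the main technical effort to go.
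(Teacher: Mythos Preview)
Your similarity reduction and your use of Lemma~\ref{lemma:Taylor_expansion} are the right skeleton, and your Rayleigh--quotient argument for the spectral equivalence is correct and more direct than what the paper does. The paper instead first passes, via the projection identity $A_n(\a)=\Pi A_N(\a)\Pi^T$, to the parallelogram domain $\Omega_N$ where $A_N(1)=T_N(\tilde f)$ is genuinely Toeplitz, and then cites \cite{ST-NA-2001,ST-SIMAX-2003}; but the paper itself remarks that one may equally well stay on $\Omega$, so the difference is packaging.

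The genuine gap is in your clustering step, and it is a misdiagnosis rather than a missing ingredient. You split the spectrum of $A_n(1)$ at a \emph{fixed} level $c_0$ and then observe, correctly, that the Szeg\H{o} distribution only gives $\#\{j:\lambda_j(A_n(1))<c_0\}=o(n)$; you then look for extra cancellations in $F_n(\a)$ to rescue proper clustering. No such cancellations are needed, and Lemma~\ref{lemma:Taylor_expansion_esagoni} does not supply them (its symmetries are what kill the $O(h)$ term and produce Lemma~\ref{lemma:Taylor_expansion}; they do nothing further for the rank of $F_n$). The correct move is to let the threshold scale with $h$: given $\varepsilon>0$, choose $\delta_n=c\,h^2$ with $c>\sup_n\|F_n(\a)\|_2/\varepsilon$, so that on the eigenspace $\{\lambda_j(A_n(1))\ge\delta_n\}$ the symmetrized perturbation $h^2\,\Theta_n^{-1/2}(1)F_n(\a)\Theta_n^{-1/2}(1)$ has norm at most $h^2\|F_n\|/\delta_n<\varepsilon$, while the complementary piece has rank at most $2\,\#\{j:\lambda_j(A_n(1))<c\,h^2\}$. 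This count is bounded independently of $n$: by Cauchy interlacing through $A_n(1)=\Pi T_N(\tilde f)\Pi^T$ and the equivalence $\tilde f\sim f$ in (\ref{eq:rel_toeplitz}), it is dominated by $\#\{(k_1,k_2):\lambda_{k_1,k_2}(T_N(f))<c'\,h^2\}$, and since the low eigenvalues of $T_N(f)$ behave like $\pi^2 h^2(k_1^2+k_2^2)$ this is the number of lattice points in a disk of fixed radius $\sim\sqrt{c'}$, hence a constant depending on $\varepsilon$ only. This finer scaling of the small eigenvalues---precisely what the paper isolates in (\ref{eq:lmin_toeplitz_bottom})--(\ref{eq:lmin_toeplitz_top}) as the ``very key step''---is what upgrades weak clustering to proper clustering, not any additional structure in $F_n(\a)$.
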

\begin{proof} %Vedi superlinear!!! Citare!!!
Since $A_n(\a)= \Pi\, A_N(\a) \, \Pi^T$, with
$\Pi \in \mathbb{R}^{n\times N}$  such that  $\mathrm{rank}(\Pi)=n$, and $\Pi \, \Pi^T=I_n$, we have
\begin{eqnarray*}
P_n^{-1}(\a) A_n(\a) & = &
  [D_n^{\frac{1}{2}}(\a) A_n(1,0) D_n^{\frac{1}{2}}(\a)]^{-1}  A_n(\a) \\
& = &
  [\Pi \, D_N^{\frac{1}{2}}(\a) \, \Pi^T \, \Pi A_N(1) \, \Pi^T \, \Pi \, D_N^{\frac{1}{2}}(\a) \, \Pi^T ]^{-1} \Pi \, A_N(\a) \, \Pi^T \\
& = &
  [\Pi \, \Pi^T \, \Pi \, D_N^{\frac{1}{2}}(\a) A_N(1)  D_N^{\frac{1}{2}}(\a) \, \Pi^T \, \Pi \, \Pi^T ]^{-1} \Pi \, A_N(\a) \, \Pi^T \\
& = &
  [\Pi \, D_N^{\frac{1}{2}}(\a) A_N(1)  D_N^{\frac{1}{2}}(\a) \, \Pi^T  ]^{-1} \Pi \, A_N(\a) \, \Pi^T \\
& = &
  [\Pi P_N^{-1}(\a) \, \Pi^T  ]^{-1} \Pi \, A_N(\a) \, \Pi^T.
\end{eqnarray*}
Since $\Pi$ is full rank, it is evident that the spectral
behavior of
\[
[ \Pi  P_N(\a) \Pi^T]^{-1}\Pi A_N(\a) \Pi^T
\]
is in principle better than the one of $P_N^{-1}(\a)A_N(\a)$, to which we can address the spectral analysis.
Thus, the proof technique refers to \cite{ST-NA-2001,ST-SIMAX-2003} and the very key step is given by the relations outlined in
(\ref{eq:lmin_toeplitz_bottom}) and (\ref{eq:lmin_toeplitz_top}).
\end{proof}
\par
%---------------------------AE------------------------------------------------
%
%\begin{remark}
It is worth stressing  that the previous claims can also be proved
by considering the sequence $\{P_n^{-1}(\a) A_n(\a)\}$, instead of
$\{P_N^{-1}(\a) A_N(\a)\}$, simply by directly referring to the
quoted asymptotical expansions. The interest may concern the
analysis of the same uniform mesh on more general domains. \par
%\end{remark}
%The previous results prove the optimality of the PCG. %% mETTERE IN SEZ 4.1
%
Moreover, the same spectral properties has been proved in the case of uniform structured meshes as in Figure \ref{fig:esagono_esagoni}.b
in \cite{ST-NA-2001}.
%---------------------------AE------------------------------------------------
%
\subsection{Convection-Diffusion Equations} %GMRES - CDEqs
The natural extension of the claim in Theorem
\ref{teo:clu+se_A_esagoni}, in the case of the matrix sequence
$\{\mathrm{Re}(A_n(\a,\b))\}$ with $\mathrm{Re}(A_n(\a,\b)) \ne
\Theta_n(\a)$, can be proved under the additional assumptions of
Lemma \ref{lemma:normaE}, in perfect agreement with Theorem 5.3 in
\cite{RT-TR-2008}. \par
\begin{theorem} \label{teo:clu+se_ReA_esagoni}
Let $\{\mathrm{Re}(A_n(\a,\b))\}$ and $\{P_n(\a)\}$ be the
Hermitian positive definite matrix sequences defined according to
\emph{(\ref{eq:def_ReA})} and \emph{(\ref{eq:def_P})} in the case
of the hexagonal domain partitioned as in Figure
\ref{fig:esagono_esagoni}.a.
%
%If the coefficient $\a(\mathbf{x})$ is strictly positive and belongs to ${\bf
%C}^2(\overline \Omega)$ and $|\mathrm{div}(\b)|$ is bounded,
%
Under the assumptions in \emph{(\ref{eq:ipotesi_coefficienti})},
the sequence $\{P_n^{-1}(\a) \mathrm{Re}(A_n(\a,\b))\}$ is
properly clustered at $1$.
% with respect to the eigenvalues.
Moreover, for any $n$ all the eigenvalues of $P_n^{-1}(\a)
\mathrm{Re}(A_n(\a,\b))$ belong to an interval $[d,D]$ well
separated from zero {\em [Spectral equivalence property]}.\par The
claim holds both in the case in which the matrix elements in
\emph{(\ref{eq:def_theta})} and \emph{(\ref{eq:def_psi})} are
evaluated exactly and whenever a quadrature formula with error
$O(h^2)$ is considered to approximate the involved integrals.
\par
\end{theorem}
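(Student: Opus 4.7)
The plan is to view $P_n^{-1}(\a)\mathrm{Re}(A_n(\a,\b))$ as an additive perturbation of $P_n^{-1}(\a)A_n(\a)$ by $P_n^{-1}(\a)E_n(\b)$. The unperturbed part is governed by Theorem \ref{teo:clu+se_A_esagoni}, while the perturbation is controlled via Lemma \ref{lemma:normaE} together with the estimate $\lambda_{\min}(P_n(\a))\sim h^2$, which follows from (\ref{eq:cond-an1}) and the fact that $D_n(\a)$ is bounded above and below by positive constants.

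The first step is to establish the structural fact $E_n(\b)\ge 0$: integrating by parts in (\ref{eq:def_E_nb}) and using that $\varphi_i\varphi_j$ vanishes on $\partial\Omega$ gives
\[
(E_n(\b))_{i,j}=\tfrac{1}{2}\int_\Omega \mathrm{div}(\b)\,\varphi_i\,\varphi_j,
\]
so $E_n(\b)$ is one half of a weighted mass matrix with nonnegative weight by (\ref{eq:ipotesi_coefficienti}). The lower spectral-equivalence bound then follows at once, since combining $E_n(\b)\ge 0$ with $A_n(\a)\ge d\,P_n(\a)$ from Theorem \ref{teo:clu+se_A_esagoni} yields $\langle\mathrm{Re}(A_n(\a,\b))x,x\rangle\ge d\,\langle P_n(\a)x,x\rangle$. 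For the upper bound, Lemma \ref{lemma:normaE} gives $|\langle E_n(\b)x,x\rangle|\le C h^2\|x\|_2^2$, and the chain $\|x\|_2^2\le\lambda_{\min}(P_n(\a))^{-1}\langle P_n(\a)x,x\rangle$ produces $\mathrm{Re}(A_n(\a,\b))\le (D+C')P_n(\a)$, completing the spectral equivalence.

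For the proper clustering at $1$, I would pass to the Hermitian similar matrix $P_n^{-1/2}(\a)\mathrm{Re}(A_n(\a,\b))P_n^{-1/2}(\a)=X+Z$, where $X:=P_n^{-1/2}(\a)A_n(\a)P_n^{-1/2}(\a)$ is properly clustered at $1$ by Theorem \ref{teo:clu+se_A_esagoni} and $Z:=P_n^{-1/2}(\a)E_n(\b)P_n^{-1/2}(\a)\ge 0$. The crucial step, and the main obstacle, is to show that $Z$ is properly clustered at $0$. Since $\|E_n(\b)\|_2\sim h^2$ and $\|P_n^{-1}(\a)\|_2\sim h^{-2}$ balance exactly, $\|Z\|_2$ is only $O(1)$, so a naive norm perturbation bound cannot conclude. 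Instead, the nonzero spectrum of $Z$ coincides with that of the generalized problem $E_n(\b)v=\mu P_n(\a)v$, whose eigenvalues are reciprocals of the low-lying eigenvalues of a discrete Laplacian-type operator, and a Weyl-type counting argument gives $\#\{i:\lambda_i(Z)>\varepsilon\}=O(1/\varepsilon)$ uniformly in $n$. A standard splitting of $Z$ into a rank-$O(1/\varepsilon)$ part plus a remainder of norm at most $\varepsilon$ then transfers the proper clustering of $X$ at $1$ to $X+Z$, yielding the claim. The quadrature-perturbed version follows identically, since the $O(h^2)$ bound in Lemma \ref{lemma:normaE} already accounts for that case.
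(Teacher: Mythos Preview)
Your argument is essentially the one the paper defers to: the paper's own proof consists solely of a pointer to \cite{RT-TR-2008}, noting that the only mesh-specific input needed is $\lambda_{\min}(A_n(1))\sim h^2$ from (\ref{eq:lmin_toeplitz_bottom})--(\ref{eq:lmin_toeplitz_top}), and you use precisely this (through $\lambda_{\min}(P_n(\a))\sim h^2$) together with Lemma \ref{lemma:normaE} and Theorem \ref{teo:clu+se_A_esagoni} in the natural perturbative way. Your Weyl-counting step for $Z=P_n^{-1/2}(\a)E_n(\b)P_n^{-1/2}(\a)$ is exactly the low-rank-plus-small-norm splitting mechanism that drives proper clustering throughout this family of papers: from $0\le E_n(\b)\le Ch^2 I$ one gets $0\le Z\le Ch^2 P_n^{-1}(\a)$, and the eigenvalue count $\#\{i:\lambda_i(P_n(\a))<Ch^2/\varepsilon\}=O(1/\varepsilon)$, uniform in $n$, is indeed the decisive fact.

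One point deserves care. The identity $(E_n(\b))_{i,j}=\tfrac12\int_\Omega\mathrm{div}(\b)\,\varphi_i\varphi_j$, and hence $E_n(\b)\ge 0$, is obtained by exact integration by parts and need not survive when (\ref{eq:def_psi}) is evaluated by a quadrature rule; so in the quadrature case your lower spectral bound cannot simply quote $E_n(\b)\ge 0$. The two-sided estimate $|\langle E_n(\b)x,x\rangle|\le C'\langle P_n(\a)x,x\rangle$ that you already use for the upper bound only yields $d-C'$ below, with no a priori sign. The clean fix, in the spirit of Lemma \ref{lemma:Taylor_expansion}, is to absorb $D_n^{-1/2}(\a)E_n(\b)D_n^{-1/2}(\a)$ (which has spectral norm $O(h^2)$ in both the exact and the quadrature case) directly into the $h^2$-correction and rerun the entire spectral-equivalence and clustering argument of Theorem \ref{teo:clu+se_A_esagoni} for $\mathrm{Re}(A_n(\a,\b))$ in place of $\Theta_n(\a)$, rather than treating it as a second perturbation on top of an already-perturbed object.
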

\begin{proof} %%LAST CHECK!!!!!!
%
%--------------------Spectral_Equivalence--------------------------
The proof can be done verbatim as in \cite{RT-TR-2008}, since the key point is proved by referring to relations
in (\ref{eq:lmin_toeplitz_bottom}) and (\ref{eq:lmin_toeplitz_top}).
\end{proof}
\par
\begin{theorem} %\emph{\cite{RT-TR-2008}}
\label{teo:clu+sb_ImA}
Let $\{\mathrm{Im}(A_n(\a,\b))\}$ and $\{P_n(\a)\}$ be the
Hermitian matrix sequences defined according to
\emph{(\ref{eq:def_ImA})} and \emph{(\ref{eq:def_P})} in the case
of the hexagonal domain partitioned as in Figure
\ref{fig:esagono_esagoni}.a. \newline %%
%%Under the same assumption on $\a(\mathbf{x})$ and $\b(\mathbf{x})$
%%considered in \emph{Theorem \ref{teo:clu+se_ReA}} ***,
%
%If the coefficient $\a(\mathbf{x})$ is strictly positive and belongs to ${\bf
%C}^2(\overline \Omega)$ and $\|\b\|_\infty$ is bounded,
%
Under the assumptions in \emph{(\ref{eq:ipotesi_coefficienti})},
the sequence $\{P_n^{-1}(\a) \mathrm{Im}(A_n(\a,\b))\}$ is
spectrally bounded and properly clustered at $0$ with respect to
the eigenvalues. The claim holds both in the case in which the
matrix elements in \emph{(\ref{eq:def_psi})} are evaluated exactly
and whenever a quadrature formula with error $O(h^2)$ is
considered to approximate the involved integrals. \par
\end{theorem}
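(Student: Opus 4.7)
The plan is to follow the two-step blueprint used in Theorems~\ref{teo:clu+se_A_esagoni}--\ref{teo:clu+se_ReA_esagoni} and in \cite{RT-TR-2008}: the projection identity $A_n(\a,\b)=\Pi\,A_N(\a,\b)\,\Pi^T$ with $\Pi\,\Pi^T=I_n$ reduces the analysis to the parallelogram domain $\Omega_N$, where the (projected) Toeplitz description coming from \eqref{eq:lmin_toeplitz_bottom}--\eqref{eq:lmin_toeplitz_top} is available and $P_N(\a)\sim\Theta_N(\a)\sim T_N(\tilde f)$. Since $\Theta_n(\a)$ is Hermitian, only the skew-symmetric part $\Psi_N(\b)-E_N(\b)=(\Psi_N(\b)-\Psi_N^T(\b))/2$ of the convection matrix enters into $\mathrm{Im}(A_N(\a,\b))$.

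For the spectral boundedness I would use the finite element integral representation $v^H\Psi_N(\b)\,w=-\int_{\Omega_N}(\b\cdot\nabla v_h)\,\overline{w_h}$ together with Cauchy--Schwarz, the Poincar\'e inequality in $H_0^1(\Omega_N)$ and the coercivity $\a\ge a_0>0$, to obtain the energy-energy bound
\[
|v^H\Psi_N(\b)\,w|\;\le\; C\,\bigl(v^H\Theta_N(\a)\,v\bigr)^{1/2}\bigl(w^H\Theta_N(\a)\,w\bigr)^{1/2}.
\]
Combined with $P_N(\a)\sim\Theta_N(\a)$ this gives $\|P_N^{-1/2}(\a)\,\mathrm{Im}(A_N(\a,\b))\,P_N^{-1/2}(\a)\|_2\le C'$, the claimed spectral boundedness.

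For the proper clustering at $0$ I would then sharpen the previous inequality by exploiting the two-dimensional mass-matrix bound $\|v_h\|_{L^2}^2\le Ch^2\|v\|_2^2$ on the uniform triangular mesh and Lemma~\ref{lemma:normaE} on $\|E_N(\b)\|_2$ in order to derive the refined energy-$\ell^2$ quadratic-form bound
\[
\bigl|v^H\,\mathrm{Im}(A_N(\a,\b))\,v\bigr|\;\le\; C\,h\,\bigl(v^H\Theta_N(\a)\,v\bigr)^{1/2}\|v\|_2.
\]
Restricting to the spectral subspace of $\Theta_N(\a)$ on which $v^H\Theta_N(\a)\,v\ge\delta\|v\|_2^2$, with $\delta=C^2 h^2/\varepsilon^2$, then forces the Rayleigh quotient of $P_N^{-1}(\a)\,\mathrm{Im}(A_N(\a,\b))$ on that subspace to be bounded by $\varepsilon$. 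The codimension of this subspace is the number of eigenvalues of $\Theta_N(\a)$ below $\delta$ and, since $\tilde f$ has a single quadratic zero at the origin of $\mathcal{D}$, the Toeplitz/Szeg\H{o} eigenvalue distribution shows that this count is $O(1/\varepsilon^2)$ \emph{independently of $n$}. A min-max/Cauchy interlacing argument then produces at most $O(1/\varepsilon^2)$ eigenvalues of $P_N^{-1}(\a)\,\mathrm{Im}(A_N(\a,\b))$ outside $(-\varepsilon,\varepsilon)$, which is the proper clustering at zero required by Definition~\ref{def:cluster}; the quadrature-error case is absorbed verbatim as in \cite{RT-TR-2008}, an $O(h^2)$ perturbation of each entry of $\Psi_N(\b)$ contributing only a subdominant term.

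The main obstacle is expected to be precisely this last Rayleigh bound: the crude estimate $\|\mathrm{Im}(A_N(\a,\b))\|_2\le Ch$ together with $\lambda_{\min}(P_N(\a))\gtrsim h^2$ yields only the useless $O(1/h)$ bound, and without the correct $\delta\sim h^2/\varepsilon^2$ scaling the low-frequency count would grow like $1/(\varepsilon h)$, destroying the $n$-independent conclusion. The delicate point is therefore to keep one factor in the $\Theta_N(\a)$-energy norm (controlled via Poincar\'e) and the other in the $\ell^2$ norm (controlled via the $O(h^2)$ mass-matrix scaling), so that the Toeplitz counting argument genuinely delivers a number of outliers independent of $n$.
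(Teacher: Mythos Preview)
Your sketch is correct and is precisely the kind of argument the paper has in mind: the paper's own proof is a one-line deferral to \cite{RT-TR-2008}, stating that the Friedrichs--Keller case carries over verbatim once the hexagonal eigenvalue localization \eqref{eq:lmin_toeplitz_bottom}--\eqref{eq:lmin_toeplitz_top} is available, and your plan reproduces exactly that structure (projection to $\Omega_N$, energy--energy bound via Poincar\'e for spectral boundedness, the refined energy--$\ell^2$ bound via the $O(h^2)$ mass-matrix scaling for clustering, then low-eigenvalue counting and min--max).

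One terminological caveat worth tightening when you write it out: the $n$-independent bound on the number of eigenvalues of $\Theta_N(\a)$ below $\delta\sim h^2/\varepsilon^2$ does \emph{not} follow from the asymptotic Szeg\H{o} distribution, which applies only at thresholds fixed independently of $N$. What actually delivers the $O(1/\varepsilon^2)$ count is the explicit sine-transform eigenvalue formula for $T_N(f)$ together with the spectral equivalences \eqref{eq:rel_toeplitz} and Theorem~\ref{teo:clu+se_A_esagoni} (giving $\Theta_N(\a)\sim\Theta_N(1)=T_N(\tilde f)\sim T_N(f)$); since you already invoke \eqref{eq:lmin_toeplitz_bottom}--\eqref{eq:lmin_toeplitz_top} this machinery is at hand and the argument goes through unchanged.
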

\begin{proof} %%LAST CHECK!!!!!!
This result has been proved in \cite{RT-TR-2008} with respect to Friedrichs-Keller triangulations, but it can easily be extended
to the case of matrix sequences arising in the case of a FE partitioning as in Figure \ref{fig:esagono_esagoni}.a by using the same arguments.
\end{proof}
\par
On the basis of these two splitted spectral results, we can easily
obtain the spectral description of the whole preconditioned matrix
sequence $\{P_n^{-1}(\a) A_n(\a,\b)\}$, according to the theorem
below. The proof technique refers to an analogous result proved in
\cite{BGS-SIMAX-2007} with respect to FD discretizations of
convection-diffusion equations.
%The interest of the present spectral analysis lies in the forecast of PGMRES convergence properties as highlighted in the next theorem.
% which proof technique refers to a previously considered FD approximations of the convection-diffusion equations in \cite{BGS-SIMAX-2007}.
\par
\begin{theorem}  \label{teo:cluA}
Let $\{(A_n(\a,\b))\}$ and $\{P_n(\a)\}$ be the matrix sequences
defined according to \emph{(\ref{eq:def_A})} and
\emph{(\ref{eq:def_P})}, both in the case of the hexagonal domain
$\Omega$ with a structured uniform mesh as in Figure
\ref{fig:esagono_esagoni}.a and in the case $\Omega=(0,1)^2$ with
a structured uniform mesh as in Figure \ref{fig:esagono_esagoni}.b
\par Under the assumptions in
\emph{(\ref{eq:ipotesi_coefficienti})}, the sequence
$\{P_n^{-1}(\a) A_n(\a,\b)\}$ is properly clustered at $1\in
\mathbf{C}^+$ with respect to the eigenvalues. In addition, these
eigenvalues all belong to a uniformly bounded rectangle with
positive real part, well separated from zero.\par The claim holds
both in the case in which the matrix elements in
\emph{(\ref{eq:def_theta})} and \emph{(\ref{eq:def_psi})} are
evaluated exactly and whenever a quadrature formula with error
$O(h^2)$ is considered to approximate the involved integrals.
\par
\end{theorem}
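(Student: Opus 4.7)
My strategy is to reduce the claim to the clustering results for the Hermitian and skew-Hermitian parts of $A_n(\a,\b)$ already proved in Theorems \ref{teo:clu+se_ReA_esagoni} and \ref{teo:clu+sb_ImA}, via a common congruence. Since $P_n(\a)$ is Hermitian positive definite by construction, I would form
\[
\tilde A_n := P_n^{-1/2}(\a)\, A_n(\a,\b)\, P_n^{-1/2}(\a) = \tilde R_n + \mathrm{i}\, \tilde I_n,
\]
where $\tilde R_n := P_n^{-1/2}(\a)\, \mathrm{Re}(A_n(\a,\b))\, P_n^{-1/2}(\a)$ and $\tilde I_n := P_n^{-1/2}(\a)\, \mathrm{Im}(A_n(\a,\b))\, P_n^{-1/2}(\a)$ are Hermitian. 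The matrix $\tilde A_n$ is similar to $P_n^{-1}(\a) A_n(\a,\b)$ and hence has the same spectrum; the same similarity relates $\tilde R_n$ to $P_n^{-1}(\a) \mathrm{Re}(A_n(\a,\b))$ and $\tilde I_n$ to $P_n^{-1}(\a) \mathrm{Im}(A_n(\a,\b))$. Therefore Theorem \ref{teo:clu+se_ReA_esagoni} yields that $\tilde R_n$ is properly clustered at $1$ with spectrum in a fixed interval $[d,D]$, $d>0$, and Theorem \ref{teo:clu+sb_ImA} yields that $\tilde I_n$ is uniformly spectrally bounded in some $[-C_I,C_I]$ and properly clustered at $0$.

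The rectangle-containment and separation from zero of the real parts then follow immediately from the Bendixson inclusion applied to $\tilde A_n$: every eigenvalue $\lambda$ of $\tilde A_n$ satisfies $\mathrm{Re}(\lambda) \in [\lambda_{\min}(\tilde R_n), \lambda_{\max}(\tilde R_n)] \subseteq [d,D]$ and $\mathrm{Im}(\lambda) \in [\lambda_{\min}(\tilde I_n), \lambda_{\max}(\tilde I_n)] \subseteq [-C_I,C_I]$, uniformly in $n$. This settles the second assertion.

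For the proper clustering at $1$, fix $\varepsilon > 0$ and use the proper clustering of each Hermitian summand to decompose
\[
\tilde R_n - I = L_1 + S_1, \qquad \tilde I_n = L_2 + S_2,
\]
with $L_1, L_2$ Hermitian of rank bounded by constants $r_1(\varepsilon), r_2(\varepsilon)$ independent of $n$ and $\|S_1\|_2, \|S_2\|_2 \le \varepsilon$. Writing $\tilde A_n - I = F_n + K_n$ with $F_n := L_1 + \mathrm{i}\, L_2$ of rank at most $r(\varepsilon) := r_1(\varepsilon) + r_2(\varepsilon)$, and $K_n := S_1 + \mathrm{i}\, S_2$ a sum of a Hermitian and a skew-Hermitian matrix of norm at most $\varepsilon$, a second application of Bendixson places the spectrum of $I + K_n$ inside the disk $D(1,\sqrt{2}\,\varepsilon)$. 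A Riesz spectral-projector / contour-integral identity along a contour enclosing the complement of that disk then shows that the rank-$r(\varepsilon)$ perturbation $F_n$ can displace at most $r(\varepsilon)$ eigenvalues outside the disk, so all but a constant number of eigenvalues of $\tilde A_n$ lie in $D(1,\sqrt{2}\,\varepsilon)$. This is exactly proper clustering of $\{P_n^{-1}(\a) A_n(\a,\b)\}$ at $1$, by the same template as in \cite{BGS-SIMAX-2007} for the finite-difference case.

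The main obstacle is the last step: $\tilde A_n$ is not normal, so Bauer--Fike-style bounds are useless because they would demand control of the conditioning of an eigenvector basis, which is not available; the combined low-rank-plus-small-norm perturbation must be handled via the projector argument just sketched, which exploits the special Hermitian-plus-skew-Hermitian structure of the small piece $K_n$ rather than any normality of $\tilde A_n$. The $O(h^2)$ robustness to quadrature is automatic, since such errors have already been absorbed into the clustering conclusions of Theorems \ref{teo:clu+se_ReA_esagoni} and \ref{teo:clu+sb_ImA} on the two Hermitian components.
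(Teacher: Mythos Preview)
Your proposal is correct and follows essentially the same route as the paper: the localization in a fixed rectangle is obtained via the field of values (Bendixson) of $P_n^{-1/2}(\a)A_n(\a,\b)P_n^{-1/2}(\a)$, whose real and imaginary parts are controlled by Theorems~\ref{teo:clu+se_ReA_esagoni} and~\ref{teo:clu+sb_ImA}, and the proper clustering at $1$ is obtained by invoking the same combination result, namely Theorem~4.3 in \cite{BGS-SIMAX-2007}. The only difference is that you spell out the low-rank-plus-small-norm splitting and the projector heuristic behind that cited theorem, whereas the paper simply quotes it.
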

\begin{proof} %%LAST CHECK!!!!!!
A localization result for the eigenvalues of the sequence $\{P_n^{-1}(\a) (A_n(\a,\b))\}$
can be stated simply by referring to the properties of the
field of values of the preconditioned matrix: in fact, for any $i$
\[
\lambda_i(P_n^{-1}(\a)A_n(\a,\b) \in \mathcal{F}=\mathcal{F}\left(P_n^{-\frac{1}{2}}(\a)A_n(\a,\b)P_n^{-\frac{1}{2}}(\a)\right)
\]
where
\[
\mathcal{F}
= \left \{ z \in \mathbb{C} :
z=  \frac{x^H \mathrm{Re}(A_n(\a,\b)) x}{x^HP_n(\a)x} +\mathrm{i}\frac{x^H \mathrm{Im}(A_n(\a,\b)) x}{x^HP_n(\a)x},\ x \in \mathbf{C}^n\backslash \{0\}
\right \}.
\]
Since for any $i$
\begin{eqnarray*}
\lambda_i(P_n^{-1}(\a)\mathrm{Re}(A_n(\a,\b)) &\in&  %\mathcal{F}=\mathcal{F}(\mathrm{Re}(A_n(\a,\b))) =
\left \{ z \in \mathbb{C} : z=  \frac{x^H \mathrm{Re}(A_n(\a,\b)) x}{x^HP_n(\a)x},\ x \in \mathbf{C}^n\backslash \{0\} \right \}, \\
\lambda_i(P_n^{-1}(\a)\mathrm{Im}(A_n(\a,\b)) &\in&  %\mathcal{F}=\mathcal{F}(\mathrm{Re}(A_n(\a,\b))) =
\left \{ z \in \mathbb{C} : z=  \frac{x^H \mathrm{Im}(A_n(\a,\b)) x}{x^HP_n(\a)x},\ x \in \mathbf{C}^n\backslash \{0\} \right \}.
\end{eqnarray*}
the claimed localization result is obtained as a consequence of those previously proved in Theorems \ref{teo:clu+se_ReA_esagoni} and \ref{teo:clu+sb_ImA}.
Moreover, the same localization results together with the claimed clustering properties allow to apply Theorem 4.3 in \cite{BGS-SIMAX-2007}
and the proof is concluded.
\end{proof}
\par The subsequent result, giving estimates on the condition number
of the eigenvector matrix of the preconditioned structure, is of
interest in the study of the convergence speed of the GMRES. In
particular a logarithmic growth in number of iteration is
associated to a polynomial bound in the spectral condition number:
such a bound is precisely given below.
\begin{theorem}
\label{lemmma:eigenvectors} Let $\a(\mathbf{x})$ and
$\b(\mathbf{x})$ constant functions and let $\{(A_n(\a,\b))\}$ and
$\{P_n(\a)\}$ be the matrix sequences defined as in Theorem
\ref{teo:cluA}
%according to \emph{(\ref{eq:def_A})} and \emph{(\ref{eq:def_P})}.
\par $Then,
P_n(\a)^{-1}A_n(\a,\b)$ can be diagonalized as
\[
P_n(\a)^{-1}A_n(\a,\b)= V_n D_n V_n^{-1}
\]
where the matrix of the eigenvectors $V_n$ can be chosen such that
$K_2(V_n) \sim h^{-1}$.
\end{theorem}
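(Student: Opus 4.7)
The plan is to exploit the drastic simplifications that arise when $\a$ and $\b$ are constant. Since $\a$ is constant, $A_n(\a,0) = \a\, A_n(1,0)$ implies $D_n(\a) = \a\, I_n$, so $P_n(\a) = \a\,\Theta_n(1)$ and $\Theta_n(\a) = \a\,\Theta_n(1)$. Hence
\[
P_n^{-1}(\a)\, A_n(\a,\b) \;=\; I_n \;+\; \frac{1}{\a}\,\Theta_n(1)^{-1}\Psi_n(\b),
\]
and it suffices to diagonalize $M_n := \Theta_n(1)^{-1}\Psi_n(\b)$, since adding $I_n$ preserves eigenvectors.

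The key structural fact to verify is that for constant $\b$ the convective matrix $\Psi_n(\b)$ is \emph{exactly} skew-symmetric on the interior basis. Integration by parts yields
\[
(\Psi_n(\b)+\Psi_n^T(\b))_{ij} \;=\; -\int_\Omega \b \cdot \nabla(\varphi_i \varphi_j) \;=\; \int_\Omega \mathrm{div}(\b)\, \varphi_i \varphi_j \;-\; \int_{\partial\Omega}(\b\cdot\nu)\,\varphi_i\varphi_j,
\]
and both terms vanish because $\mathrm{div}(\b)\equiv 0$ and $\varphi_i|_{\partial\Omega}=\varphi_j|_{\partial\Omega}=0$. Performing the congruence by $\Theta_n(1)^{1/2}$ gives
\[
S_n \;:=\; \Theta_n(1)^{1/2}\, M_n\, \Theta_n(1)^{-1/2} \;=\; \Theta_n(1)^{-1/2}\,\Psi_n(\b)\,\Theta_n(1)^{-1/2},
\]
which is real and skew-symmetric, hence unitarily diagonalizable: $S_n = U\Lambda U^H$ with $U$ unitary and $\Lambda$ purely imaginary diagonal. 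Pulling the similarity back provides $M_n = V_n \Lambda V_n^{-1}$ with
\[
V_n \;:=\; \Theta_n(1)^{-1/2}\, U,
\]
and the same $V_n$ diagonalizes $P_n^{-1}(\a)\,A_n(\a,\b)$ with eigenvalues $1 + \Lambda_{jj}/\a$.

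The conditioning bound then follows immediately: since $U$ is unitary,
\[
K_2(V_n) \;\le\; \|\Theta_n(1)^{-1/2}\|_2\,\|\Theta_n(1)^{1/2}\|_2 \;=\; \sqrt{K_2(\Theta_n(1))},
\]
and relation (\ref{eq:cond-an1}) gives $K_2(\Theta_n(1))\sim h^{-2}$, whence $K_2(V_n)=O(h^{-1})$. Sharpness is apparent because $V_n$ carries the factor $\Theta_n(1)^{-1/2}$ while the right multiplication by the unitary $U$ cannot compensate. The main (and in fact only) non-algebraic ingredient is the exact skew-symmetry of $\Psi_n(\b)$; once it is secured by the integration-by-parts computation above, the rest of the argument is a one-line similarity transformation combined with the known diffusion conditioning of Section~\ref{sez:diffusion_equation}.
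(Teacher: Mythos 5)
Your argument is essentially identical to the paper's proof: both reduce to $P_n^{-1}(\a)A_n(\a,\b)=I_n+\a^{-1}\Theta_n^{-1}(1)\Psi_n(\b)$, conjugate by $\Theta_n^{1/2}(1)$ to obtain a skew-symmetric (hence normal) matrix that is unitarily diagonalized, and take $V_n=\Theta_n^{-1/2}(1)U$ so that $K_2(V_n)=K_2(\Theta_n^{-1/2}(1))\sim h^{-1}$ via (\ref{eq:cond-an1}). The only difference is that you justify the exact skew-symmetry of $\Psi_n(\b)$ by integration by parts, a detail the paper asserts without proof; this is a correct and welcome addition.
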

\begin{proof}
Under the quoted assumption it holds that $P_n(\a)=\a \Theta_n(1)$
and $\Psi_n(\b)$ is skew-symmetric. Thus, we have
\begin{eqnarray*}
P_n(\a)^{-1}A_n(\a,\b) &=& I_n + \a^{-1}
\Theta_n^{-1}(1)\Psi_n(\b) \\
    &=& I_n +\Theta_n^{-\frac{1}{2}}(1) W_n
    \Theta_n^{\frac{1}{2}}(1),
\end{eqnarray*}
with $W_n= a^{-\frac{1}{2}}\Theta_n^{-\frac{1}{2}}(1) \Psi_n(\b)
\Theta_n^{-\frac{1}{2}}(1) a^{-\frac{1}{2}}$ skew-symmetric
matrix. Therefore, the matrix $I_n+W_n$ is a normal normal, so
that $I_n+W_n=Q_n D_n Q_n^H$, where $D_n$ is diagonal and $Q_n$ is
unitary. Consequently,
\begin{eqnarray*}
P_n(\a)^{-1}A_n(\a,\b) &=& \Theta_n^{-\frac{1}{2}}(1) (I_n+W_n)
    \Theta_n^{\frac{1}{2}}(1)\\
     &=& \Theta_n^{-\frac{1}{2}}(1) Q_n D_n Q_n^H \Theta_n^{\frac{1}{2}}(1)\\
     &=& V_n D_n  V_n^{-1}
\end{eqnarray*}
with $V_n=\Theta_n^{-\frac{1}{2}}(1) Q_n$ eigenvector matrix.
Due to the fact that $Q_n$ is unitary, we have $K_2(V_n)=K_2(\Theta_n^{-\frac{1}{2}}(1) Q_n)=K_2(\Theta_n^{-\frac{1}{2}}(1))$
and finally $K_2(V_n)\sim  h^{-1}$ by invoking the key relation (\ref{eq:cond-an1}).
\end{proof}
\ \par \
%%-------------------------------------------------------------------------------
\section{Numerical tests} \label{sez:numerical_tests}

The section is divided into two parts. In the first we briefly
discuss the complexity features of our preconditioning proposals.
In the second part we report and critically discuss few numerical
experiments in which the meshes are both structured and
unstructured, while the regularity features required by the
theoretical analysis are somehow relaxed.

\subsection{Complexity issues} \label{sez:complexity_issues}
First of all, we report some remarks about the computational costs
of the proposed iterative procedure. \par The main idea is that
such a technique is easily applicable. In fact, a Krylov method is
considered, so simply requiring a matrix vector routine for sparse
matrices and a solver for the chosen preconditioner. \\ Therefore,
since the preconditioner is defined as $P_n(\a)=D_n^{{1}/{2}}(\a)
A_n(1,0) D_n^{{1}/{2}}(\a)$, where
$D_n(\a)=\mathrm{diag}(A_n(\a,0))\mathrm{diag}\!^{-1}(A_n(1,0))$,
the solution of the linear system in (\ref{eq:modello_discreto})
with matrix $A_n(\a,\b)$ is reduced to computations involving
diagonals and the matrix $A_n(1,0)$ ($A_n(\a)$ and $A_n(1,0)$ for
the diffusion problem, respectively). \par
%
%Indeed, beside the spectral theoretical analysis of the preconditioned structures,
%the idea is to propose a technique that can be easily used. In fact, the ingredients
%are a Krylov method (e.g. GMRES, BICGSTAB, etc.), a matrix vector routine (for
%sparse or even diagonal matrices), and a solver for the related diffusion equation with
%constant coefficient (a method based e.g. on the cyclic reduction approach [9, 14*] or on
%multigrid methods [27, 19*] for which a professional software is available).
% METTERE RIFERIMENTO A PHSS
%
As well known, whenever the domain is partitioned by considering a
uniform structured mesh this latter task can be efficiently
performed by means of fast Poisson solvers, among which we can
list those based on the cyclic reduction idea (see e.g.
\cite{BDGG-SINUM-1971,D-SIAMRev-1970,S-SIAMRev-1977}) and several
specialized algebraic or geometric multigrid methods (see e.g.
\cite{H-Springer-1985,TOS-AP-2001,S-NM-2002}). In addition, the
latter can be efficiently considered also in more general mesh
settings. The underlying idea is that the main effort in devising
efficient algorithms must be devoted only to this simpler problem
with constant coefficient, instead of the general one. \par Now,
the effectiveness of the proposed method is measured by referring
to the optimality definition below.
%-------------------------------------------------------------------------------
% Def 3.1 Etna-1d Precondizionatore ottimale - Owe
%-------------------------------------------------------------------------------
\par
\begin{definition}\emph{\cite{AN-1993}} \label{def:opt}
Let $\{A_m \mathbf{x}_m = \mathbf{b}_m\}$ be a given sequence of
linear systems of increasing dimensions. An iterative method is
  \emph{optimal} if
  \begin{enumerate}
    \item[{\rm 1.}] the arithmetic cost of each iteration is at most
      proportional to the complexity of a matrix-vector product with
      matrix $A_m$,
    \item[{\rm 2.}] the number of iterations for reaching the solution within a
      fixed accuracy can be bounded from above by a constant
      independent of $m$.
  \end{enumerate}
\end{definition}
\par
%In other words, the problem of solving a linear system with coefficient matrix $A_m$
%is asymptotically of the same cost as the direct problem of multiplying $A_m$ by a vector.
In such a respect Theorem \ref{teo:clu+se_A_esagoni} proves the
optimality of the PCG method: the iterations number for reaching
the solution within a fixed accuracy can be bounded from above by
a constant independent of the dimension $n=n(h)$ and the
arithmetic cost of each iteration is at most proportional to the
complexity of a matrix-vector product with matrix $A_n(\a,0)$.
\par
Moreover, Theorem \ref{teo:cluA} proves that all the eigenvalues
of the preconditioned matrix belong in a complex rectangle $\{z
\in \mathbb{C}: \mathrm{Re}(z) \in [d,D], \ \mathrm{Im}(z) \in
[-\hat{d},\hat{d}]\}$, with $d,D>0$, $\hat{d}\ge 0$ independent of
the dimension $n=n(h)$. It is worth stressing that the existence
of a proper eigenvalue cluster and the aforementioned localization
results in the preconditioned spectrum can be very important for
fast convergence of preconditioned GMRES iterations (see, e.g.,
\cite{BN-BIT-2003}).
\par
%----------------------------------------------------------------------------------
Finally, we want to give notice that the PCG/PGMRES numerical
performances do not get worse in the case of unstructured meshes,
despite the lack of a rigorous proof. \newline
%%%% Approximate factorization
%
\subsection{Numerical results} \label{sez:numerical_results}
Before analyzing in detail selected numerical results, we wish to
give technical information on the performed numerical experiments.
We apply the PCG or the PGMRES method, in the symmetric and
non-symmetric case, respectively, with the preconditioning
strategy described in section \ref{sez:fem}, to FE approximations
of the problem (\ref{eq:modello}). Whenever required, the involved
integrals have been approximated by means of the barycentric quadrature %middle point
rule (the approximation by means of the nodal quadrature %trapezoidal
rule gives rise to similar results, indeed both are exact when
applied to linear functions).\par The domains of integration
$\Omega$ are those reported in Figure \ref{fig:esagono_esagoni}
and we assume Dirichlet boundary conditions.
%$\a$ uniformly positive function and
%$\b$  function vector which are regular enough as required by Lemma \ref{lemma:normaE}
%and Theorems \ref{teo:clu+se_ReA} and \ref{teo:clu+sb_ImA}.
All the reported numerical experiments are performed in Matlab, by
employing the available \texttt{pcg} and \texttt{gmres} library
functions; the iterative solvers start with zero initial guess and
the stopping criterion $||r_k||_2\leq 10^{-7}||r_0||_2$ is
considered. The case of unstructured meshes is also discussed and
compared, together with various types of regularity in the
diffusion coefficient. \par In fact, we consider the case of a
coefficient function satisfying the assumptions
(\ref{eq:ipotesi_coefficienti}). More precisely, the second
columns in Table \ref{tab:IT-ES123esagono_triangoliequilateri_M5}
report the number of iterations required to achieve the
convergence for increasing values of the coefficient matrix size
$n=n(h)$ when considering the FE approximation with structured
uniform meshes as in Figure \ref{fig:esagono_esagoni} and with
template function
$\a(x,y)=\a_1(x,y)=\exp(x+y)$, $\b(x,y)=[x\ y]^T$.% satisfying the required regularity assumptions.
The subsequent meshes are obtained by means of a progressive
refinement procedure, consisting in adding new nodes corresponding
to the middle point of each edge. The numerical experiments
plainly confirm the previous theoretical analysis in section
\ref{sez:clustering}: the convergence behavior does not depend on
the coefficient matrix dimension $n=n(h)$. \par
%----------------------------------------------------------------------------------------------------------
%The numerical results in Table \ref{tab:OUTLIER-ES1M1} give evidence of the strong clustering properties
%when the previously defined preconditioner $P_n(\a)$ is applied. More precisely, for increasing
%values of the coefficient matrix dimension $n$, we report the number of outliers of
%$P_n^{-1}(\a)\mathrm{Re}(A_n(\a,\b))$ with respect to a cluster at $1$ with radius
%$\delta = 0.1$ (or $\delta =0.01$): $m_-$ is the number of outliers less then $1-\delta$,
%$m_+$ is the number of outliers greater then $1 +\delta$,  $p_{\mathrm{tot}}$ is the related total percentage.
%In addition, we report the minimal and maximal eigenvalue of the preconditioned matrices.
%The same information is reported for the matrices $iP_n^{-1}(\a)\mathrm{Im}(A_n(\a,\b))$, but
%with respect to a cluster at $0$.
%----------------------------------------------------------------------------------------------------------
As anticipated, despite the lack of corresponding theoretical results, we want to test the convergence behavior
also in the case in which the regularity assumption on $\a(x,y)$
in Theorems \ref{teo:clu+se_A_esagoni}
%\ref{teo:clu+se_ReA}
and \ref{teo:cluA} are not satisfied. The analysis is motivated by
favorable known numerical results in  the case of FD
approximations (see, for instance,
\cite{ST-ETNA-2003,ST-SIMAX-2003,BGST-NM-2005}) or FE
approximation with only the diffusion term \cite{ST-NA-2001}. More
precisely, we consider as template the $\mathcal{C}^1$ function
$\a(x,y)=\a_2(x,y)=e^{x+|y-y_0|^{3/2}}$, the $\mathcal{C}^0$
function $\a(x,y)=\a_3(x,y)=e^{x+|y-y_0|}$, with $y_0=\sqrt(3)/4$
or $y_0=1/2$.
%and the piecewise constant function $\a(x,y)=\a_4(x,y)=1$ if $y<1/2$, $10$ otherwise.
\par
The number of required iterations is listed in the remaining
columns in Table \ref{tab:IT-ES123esagono_triangoliequilateri_M5}.
Notice that also in these cases with a $\mathcal{C}^1$ or
$\mathcal{C}^0$ diffusion function, the  iteration count does not
depend on the coefficient matrix dimension $n=n(h)$.  The same
results are reported in Table
\ref{tab:IT-ES123quadrato_triangolirettangoli_M1} with respect to
structured uniform meshes on the domain $\Omega=(0,1)^2$. \par
Furthermore, we want to test our proposal in the case of some
unstructured meshes generated by triangle \cite{Triangle} with a
progressive refinement procedure. The first meshes in the
considered mesh sequences are reported in Figures
\ref{fig:Mesh_M6newnew} and \ref{fig:Mesh_M4}, respectively.\par
Tables \ref{tab:IT-ES123esagono_unstructured_M6newnew} and
\ref{tab:IT-ES123quadrato_unstructured_M4} report the number of
required iterations in the case of the previous template
functions. Negligible differences in the iteration trends are
observed for increasing dimensions $n$.
%
%Only a potential very mild dependency on $n$ is observed with respect to the PGMRES  iterations in Table \ref{tab:IT-ES123quadrato_unstructured_M4}***.
%
All these remarks are in perfect agreement with the outliers
analysis of the matrices $P_n^{-1}(\a)A_n(\a,\b)$, with respect to
a cluster at $1 \in \mathbb{C}^+$ (see some examples in Figure
\ref{fig:outlier_analysis}). \par
%
%-------------------------------------------------------------------------------
%
To conclude the section we take into consideration the more
realistic setting in which the meshes are generated by a
specialized automatic procedure. According to this point of view,
we have applied a spectral approximation of the matrix sequence
$\{A_n(a)\}$ in terms of product of low-cost matrix structures,
i.e., $A_n(1)$ and $D_n(a)$, that carry the ``structural'' content
of the variational problem  and the specific ``informative''
content contained in the diffusion coefficient function $a$,
respectively. However, the matrix $A_n(1)$ can be still not easy
to handle. Therefore, we  go beyond in such idea by defining a
preconditioner $\tilde P_n(a)$ in which the matrix $A_n(1)$
related to the unstructured mesh is replaced by a suitable
projection of the Toeplitz matrix $T_N(\tilde{f})$,
$\tilde{f}(s_1,s_2)=\sqrt{3}(6-2\cos(s_1)-2\cos(s_2)-2\cos(s_1+s_2)){/3}$,
$(s_1,s_2) \in \mathcal{D}=(-\pi, \pi]^2$ (see Section
\ref{sez:diffusion_equation}).\par The numerical results are
reported in Table \ref{tab:IT_perturbed_mesh}. As expected, the
number of iterations is a constant with respect to the dimension
when the preconditioner $P_n(a)$ is applied. Nevertheless,  for
$n$ large enough, the same seems to be true also for the
preconditioner $\tilde P_n(a)$. Indeed, for increasing dimensions
$n$, the unstructured partitioning is more and more similar to the
one given by equilateral triangles sketched in Fig.
\ref{fig:esagono_esagoni}.a. \par A theoretical ground supporting
these observation is still missing and would be worth in our
opinion to be studied and developed. \par
%
%-------------------------------------------------------------------------------
%-------------------------------------------------------------------------------
\begin{table}
\caption{Number of PCG and PGMRES iterations - structured meshes as in Fig. \ref{fig:esagono_esagoni}.a, $\Omega$ hexagonal domain.%triangoli equilateri
}
% n= numero nodi effettivi
\label{tab:IT-ES123esagono_triangoliequilateri_M5}
\begin{center} \footnotesize
\begin{tabular}{|l|c|c|c|}
 \hline
 \multicolumn{4}{|c|}{PCG \phantom{$\b^T$}}  \\
 \hline
 $n$     & $\a_1(x,y)$ & $\a_2(x,y)$ & $\a_3(x,y)$ \\ \hline
 37      & 3 & 4 & 5 \\
 169     & 3 & 4 & 5 \\
 721     & 3 & 4 & 4 \\
 2977    & 3 & 4 & 4 \\
 12097   & 3 & 4 & 4 \\
 48769   & 3 & 4 & 4 \\
\hline
\end{tabular}
\begin{tabular}{|l|c|c|c|}
 \hline
 \multicolumn{4}{|c|}{PGMRES, $\b(x,y)=[x \ y]^T$}  \\
 \hline
 $n$    & $\a_1(x,y)$ & $\a_2(x,y)$ & $\a_3(x,y)$ \\ \hline
 37      & 4 & 5 & 5 \\
 169     & 4 & 5 & 5 \\
 721     & 4 & 5 & 5 \\
 2977    & 4 & 5 & 5 \\
 12097   & 4 & 5 & 5 \\
 48769   & 4 & 5 & 5 \\
\hline
\end{tabular}
\end{center}
\end{table}
%%-------------------------------------------------------------------------------
\begin{table}
\caption{Number of PCG and PGMRES iterations - structured meshes as in Fig. \ref{fig:esagono_esagoni}.b, $\Omega=(0,1)^2$. %triangoli rettangoli
}
\label{tab:IT-ES123quadrato_triangolirettangoli_M1}
\begin{center} \footnotesize
\begin{tabular}{|l|c|c|c|}
 \hline
 \multicolumn{4}{|c|}{PCG \phantom{$\b^T$}}  \\
 \hline
 $n$    & $\a_1(x,y)$ & $\a_2(x,y)$ & $\a_3(x,y)$ \\ \hline
 81     & 3 & 4 & 4 \\
 361    & 3 & 4 & 5 \\
 1521   & 3 & 4 & 5 \\
 6241   & 3 & 4 & 5 \\
 25281  & 3 & 4 & 5 \\
 128881 & 3 & 4 & 5 \\

\hline
\end{tabular}
\begin{tabular}{|l|c|c|c|}
 \hline
 \multicolumn{4}{|c|}{PGMRES, $\b(x,y)=[x \ y]^T$}  \\
 \hline
 $n$    & $\a_1(x,y)$ & $\a_2(x,y)$ & $\a_3(x,y)$ \\ \hline
 81     & 4  & 5 & 5 \\
 3611   & 4  & 5 & 5 \\
 1521   & 4  & 5 & 5 \\
 6241   & 4  & 5 & 5 \\
 25281  & 4  & 5 & 5 \\
 128881 & 4  & 5 & 5 \\
\hline
\end{tabular}
\end{center}
\end{table}
%%-------------------------------------------------------------------------------
%-------------------------------------------------------------------------------
\begin{table}
\caption{Number of PCG and PGMRES iterations - unstructured meshes on the hexagonal domain $\Omega$.}
\label{tab:IT-ES123esagono_unstructured_M6newnew}
\begin{center} \footnotesize
\begin{tabular}{|l|c|c|c|}
 \hline
 \multicolumn{4}{|c|}{PCG \phantom{$\b^T$}}  \\
 \hline
 $n$    & $\a_1(x,y)$ & $\a_2(x,y)$ & $\a_3(x,y)$ \\ \hline
 28     & 5 & 5 & 5 \\
 73     & 4 & 4 & 5 \\
 265    & 4 & 4 & 5 \\
 1175   & 4 & 4 & 5 \\
 4732   & 4 & 4 & 5 \\
 19288  & 4 & 4 & 5 \\
 76110  & 4 & 4 & 4 \\
\hline
\end{tabular}
\begin{tabular}{|l|c|c|c|}
 \hline
 \multicolumn{4}{|c|}{PGMRES, $\b(x,y)=[x \ y]^T$}  \\
 \hline
 $n$    & $\a_1(x,y)$ & $\a_2(x,y)$ & $\a_3(x,y)$ \\ \hline
 28     & 4 & 5 & 5 \\
 73     & 4 & 5 & 5 \\
 265    & 4 & 5 & 5 \\
 1175   & 4 & 5 & 5 \\
 4732   & 4 & 5 & 5 \\
 19288  & 4 & 5 & 5 \\
 76110  & 4 & 5 & 5 \\
\hline
\end{tabular}
\end{center}
\end{table}
%%-------------------------------------------------------------------------------
\begin{table}
\caption{Number of PCG and PGMRES iterations - unstructured meshes on the domain $\Omega=(0,1)^2$.}
\label{tab:IT-ES123quadrato_unstructured_M4}
\begin{center} \footnotesize
\begin{tabular}{|l|c|c|c|}
 \hline
 \multicolumn{4}{|c|}{PCG \phantom{$\b^T$}}  \\
 \hline
 $n$     & $\a_1(x,y)$ & $\a_2(x,y)$ & $\a_3(x,y)$ \\ \hline
 24      & 4 & 5 & 5  \\
 109     & 4 & 5 & 5  \\
 465     & 4 & 5 & 5  \\
 1921    & 4 & 5 & 5  \\
 7809    & 4 & 5 & 5  \\
 31489   & 4 & 5 & 5  \\
 %126465  &  &  &   \\
 \hline
\end{tabular}
\begin{tabular}{|l|c|c|c|}
 \hline
 \multicolumn{4}{|c|}{PGMRES, $\b(x,y)=[x \ y]^T$}  \\
 \hline
 $n$     & $\a_1(x,y)$ & $\a_2(x,y)$ & $\a_3(x,y)$ \\ \hline
 24      & 4 & 5 & 5 \\
 109     & 4 & 5 & 5  \\
 465     & 4 & 5 & 5  \\
 1921    & 4 & 5 & 5  \\
 7809    & 4 & 5 & 5  \\
 31489   & 4 & 5 & 5  \\
 %126465  & 4 &  &   \\
\hline
\end{tabular}
\end{center}
\end{table}
%%-------------------------------------------------------------------------------
%\begin{table}
%\caption{Number of PCG and PGMRES iterations - unstructured meshes
%on the domain $\Omega=(0,1)^2$.}
%\label{tab:IT-ES123quadrato_unstructured_M4}
%\begin{center} \footnotesize
%\begin{tabular}{|l|c|c|c|}
% \hline
% \multicolumn{4}{|c|}{PCG \phantom{$\b^T$}}  \\
% \hline
% $n$     & $\a_1(x,y)$ & $\a_2(x,y)$ & $\a_3(x,y)$ \\ \hline
% 5       & 3 & 3 & 3  \\
% 9       & 5 & 5 & 5 \\
% 55      & 4 & 4 & 5 \\
% 142     & 4 & 5 & 5 \\
% 725     & 4 & 5 & 5 \\
% 1538    & 4 & 5 & 5 \\
% 7510    & 4 & 5 & 5 \\
% 15690   & 4 & 5 & 5 \\
% 75849   &   &   &   \\
%\hline
%\end{tabular}
%\begin{tabular}{|l|c|c|c|}
% \hline
% \multicolumn{4}{|c|}{PGMRES, $\b(x,y)=[x \ y]^T$}  \\
% \hline
% $n$     & $\a_1(x,y)$ & $\a_2(x,y)$ & $\a_3(x,y)$ \\ \hline
% 5       & 4 & 3 & 4 \\
% 9       & 4 & 4 & 5 \\
% 55      & 4 & 5 & 5 \\
% 142     & 4 & 5 & 5 \\
% 725     & 4 & 5 & 5 \\
% 1538    & 4 & 5 & 5 \\
% 7510    & 4 & 5 & 5 \\
% 15690   & 4 & 5 & 5 \\
% 75849   & 4 & 5 & 5  \\
%\hline
%\end{tabular}
%\end{center}
%\end{table}
%%-------------------------------------------------------------------------------
\begin{table}
\caption{Number of PCG and PGMRES iterations - structured and
unstructured meshes on the hexagonal domain $\Omega$,
% (**same number of nodes**),
 $\a_1(x,y)$, $\b(x,y)=[x \ y]^T$.}
%Mesh perturbate
\label{tab:IT_perturbed_mesh}
% P e' il teorico relativo alla mesh irregolare!!!
%
\label{tab:IT-ES123esagono_perturbed}
\begin{center} \footnotesize
\begin{tabular}{|l|c|c|}
 \hline
 \multicolumn{3}{|c|}{PCG}  \\
 \hline
 $n$      & $P$ & $\tilde{P}$  \\ \hline
  37     & 4 & 8  \\
 169     & 4 & 10 \\
 721     & 4 & 11 \\
 2977    & 4 & 11  \\
 12095   & 4 & 13  \\
 48769   & 4 & 16  \\
\hline
\end{tabular}
\quad
\begin{tabular}{|l|c|c|}
 \hline
 \multicolumn{3}{|c|}{PGMRES}  \\
 \hline
 $n$      & $P$ & $\tilde{P}$  \\ \hline
 37      & 4 & 8  \\
 169     & 4 & 9  \\
 721     & 4 & 9  \\
 2977    & 4 & 10 \\
 12095   & 4 & 11  \\
 48769   & 4 & 13  \\
\hline
\end{tabular}
\end{center}
\end{table}
%%-------------------------------------------------------------------------------
%-------------------------------------------------------------------------------
\begin{figure}
\centering \vskip -0.3cm
\epsfig{file=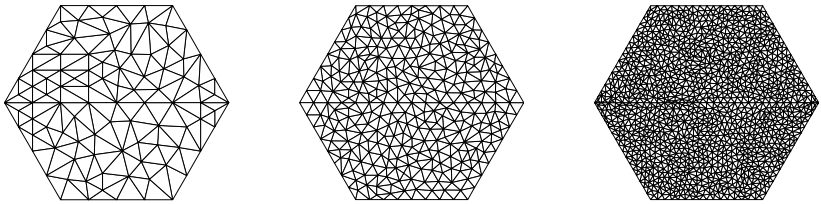,height=12cm} \\
\vskip -4.5cm \caption{Unstructured Meshes on the hexagonal domain
$\Omega$.} \label{fig:Mesh_M6newnew}
\end{figure}
%-------------------------------------------------------------------------------
\begin{figure}
\centering \vskip -0.5cm
\epsfig{file=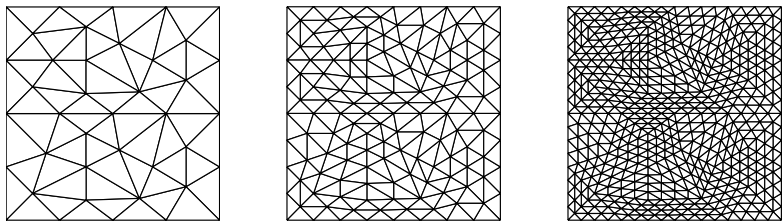,clip=, height=10cm}
\vskip -3.5cm \caption{Unstructured Meshes on the domain
$\Omega=(0,1)^2$.} \label{fig:Mesh_M4}
\end{figure}
\clearpage
%-------------------------------------------------------------------------------
%\begin{figure} % Tavola 4.4 Old
%\centering \vskip -0.5cm \epsfig{file=mesh_m4bis.eps,clip=,
%height=10cm} \vskip -3.5cm \caption{Unstructured Meshes on the
%domain $\Omega=(0,1)^2$.} \label{fig:Mesh_M4}
%\end{figure}
%-------------------------------------------------------------------------------
\begin{figure}
\centering
\epsfig{file=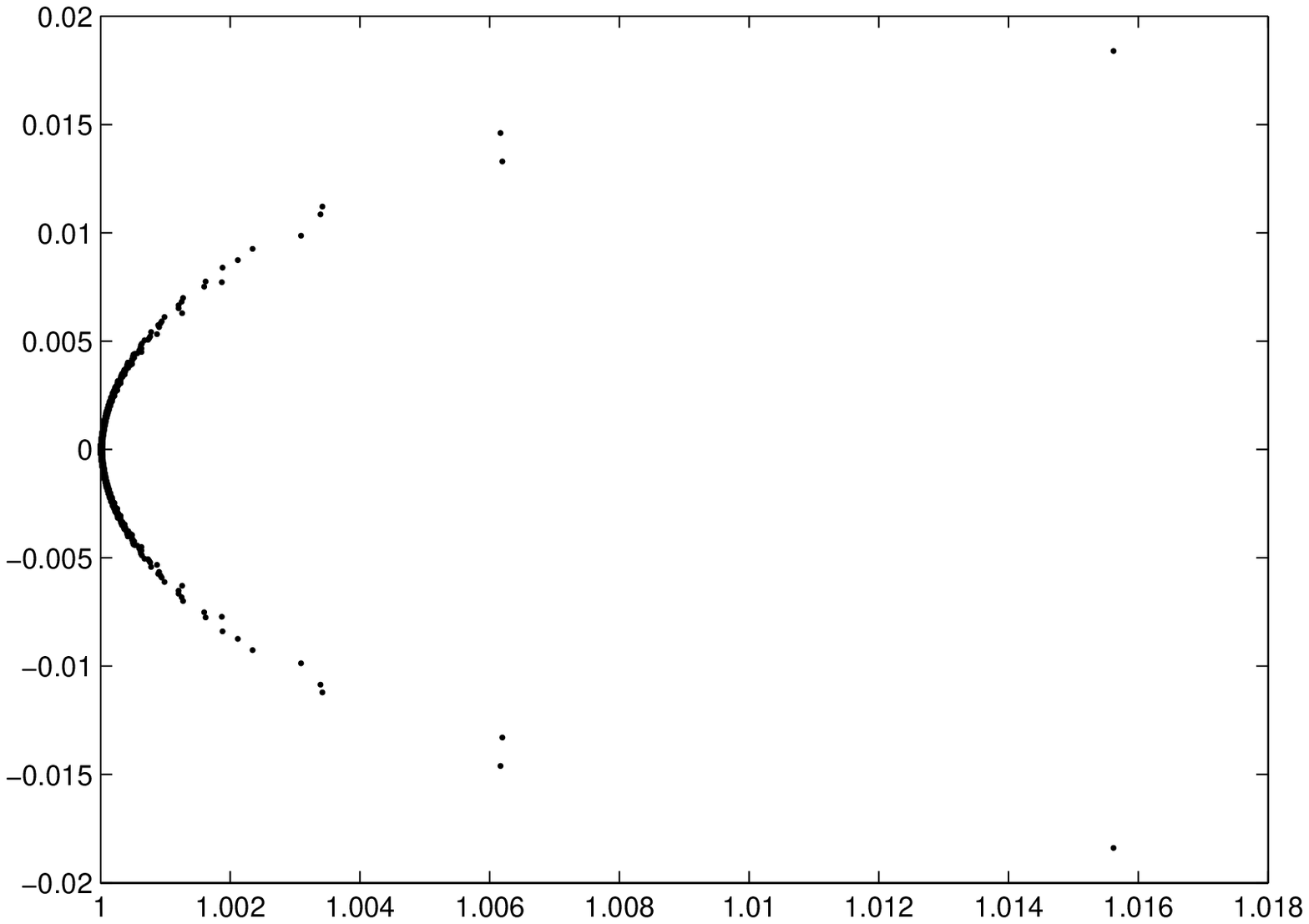,clip=, width=4.5cm} \hskip -0.5cm
\epsfig{file=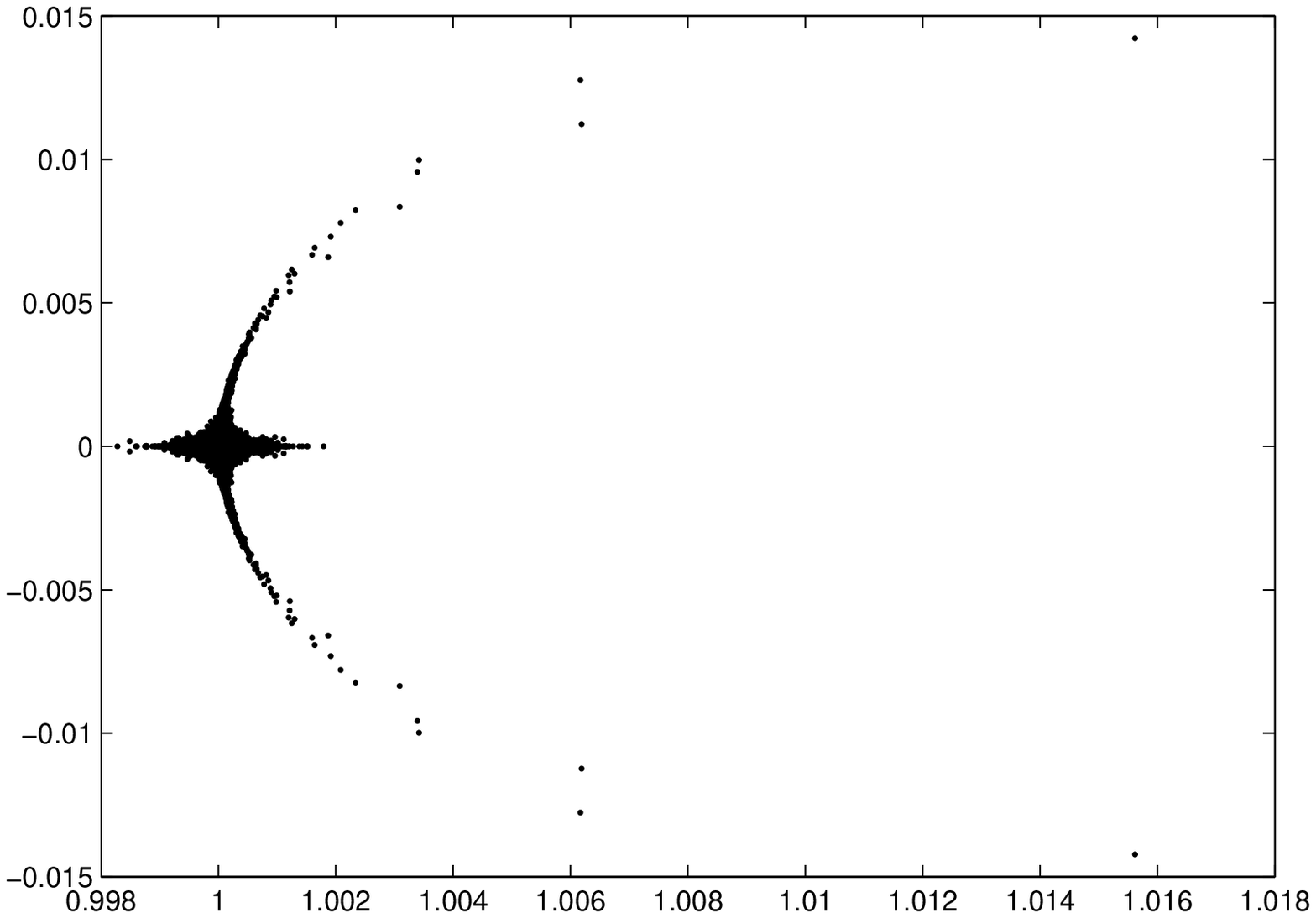,clip=, width=4.5cm}  \hskip -0.5cm
\epsfig{file=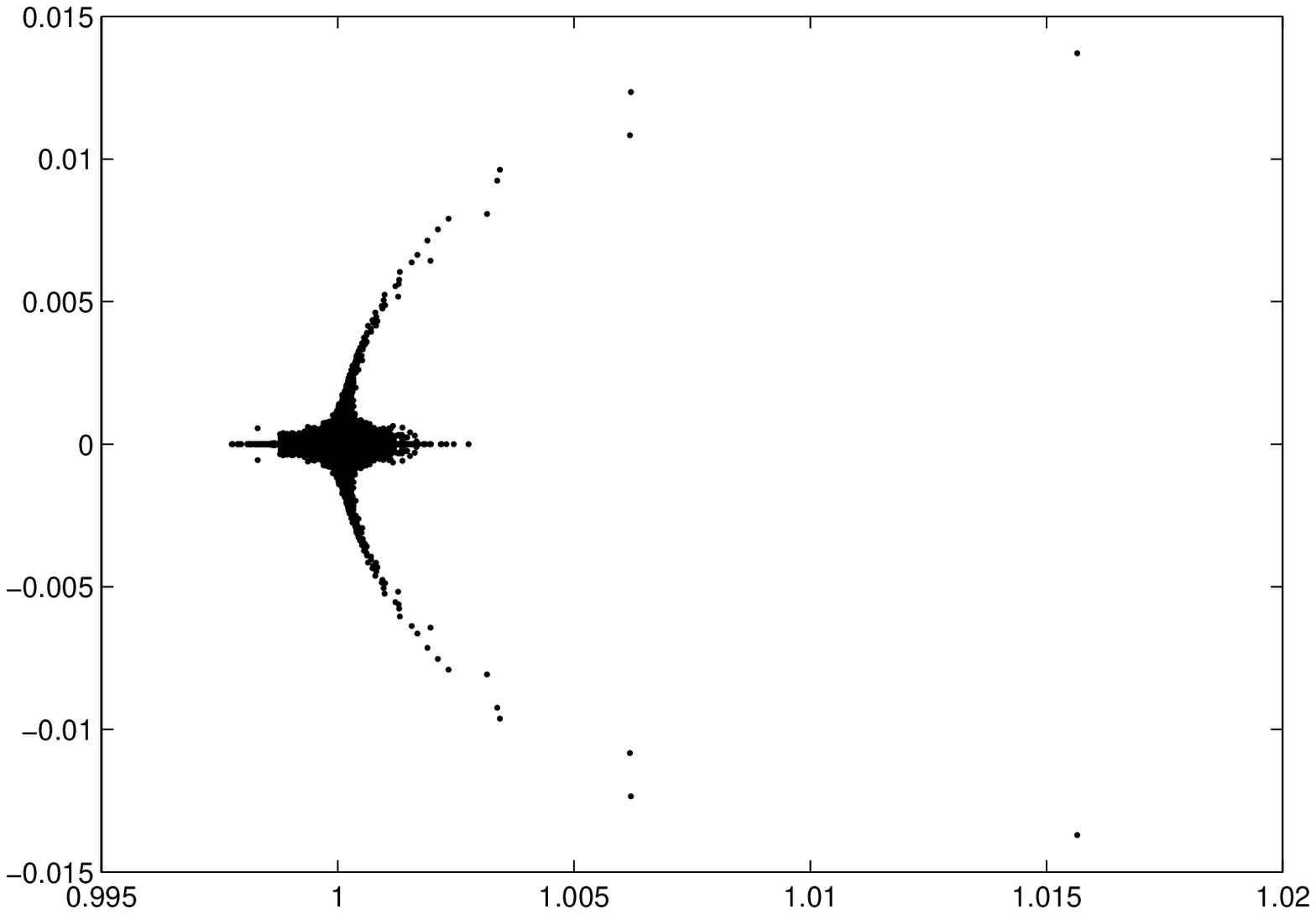,clip=, width=4.5cm}\\
\epsfig{file=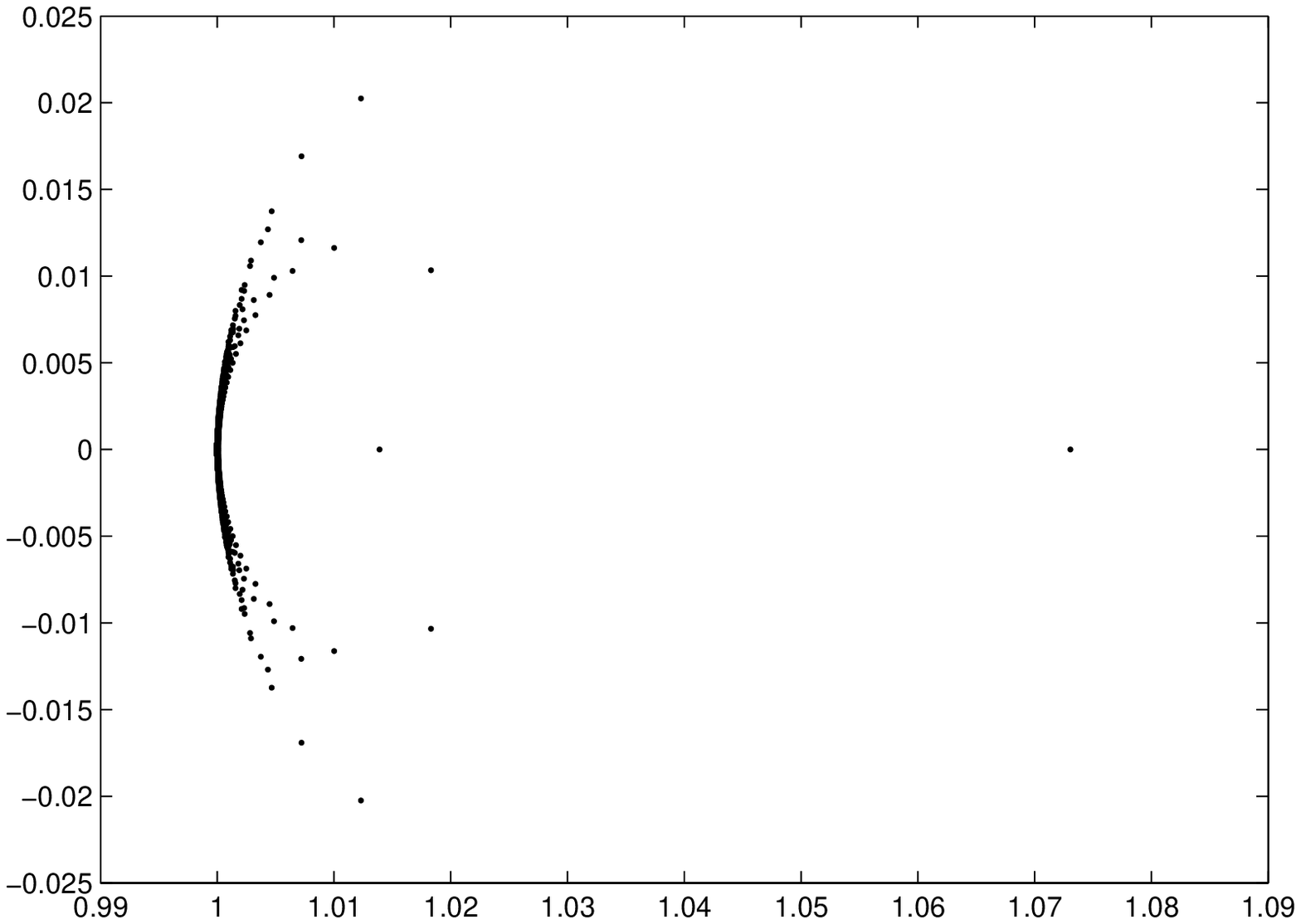,clip=, width=4.5cm} \hskip -0.5cm
\epsfig{file=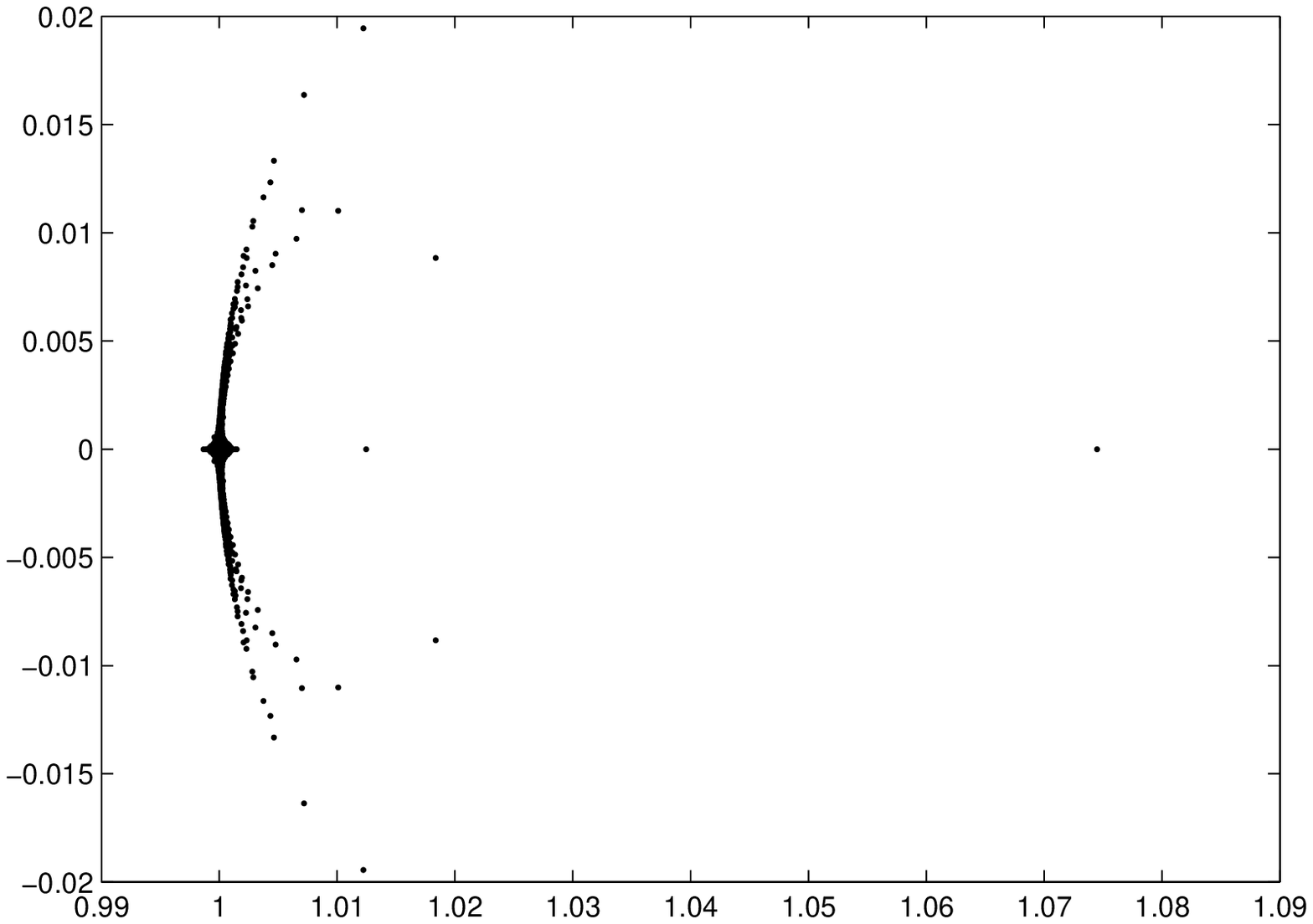,clip=, width=4.5cm}\hskip -0.5cm
\epsfig{file=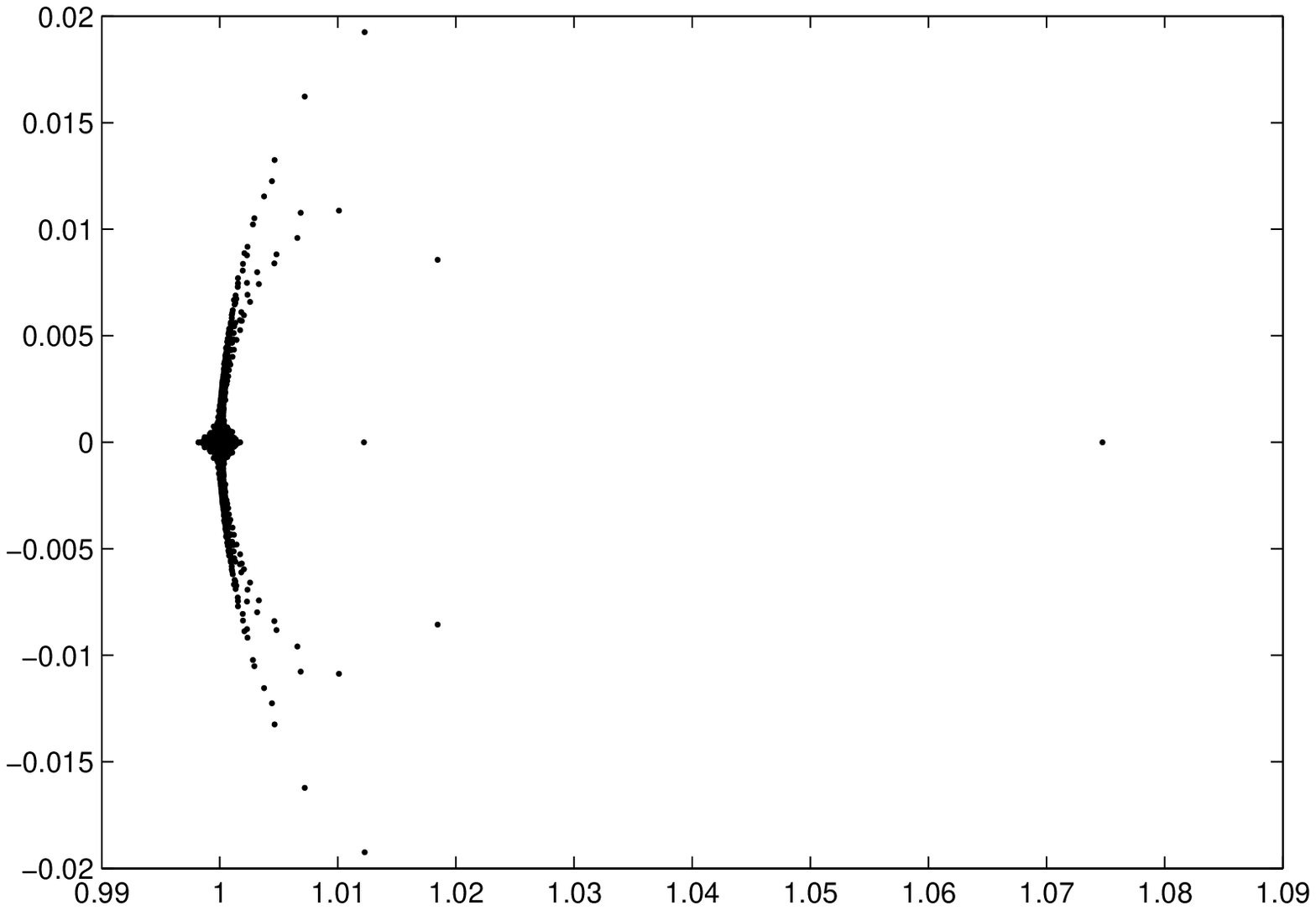,clip=, width=4.5cm} \\
\epsfig{file=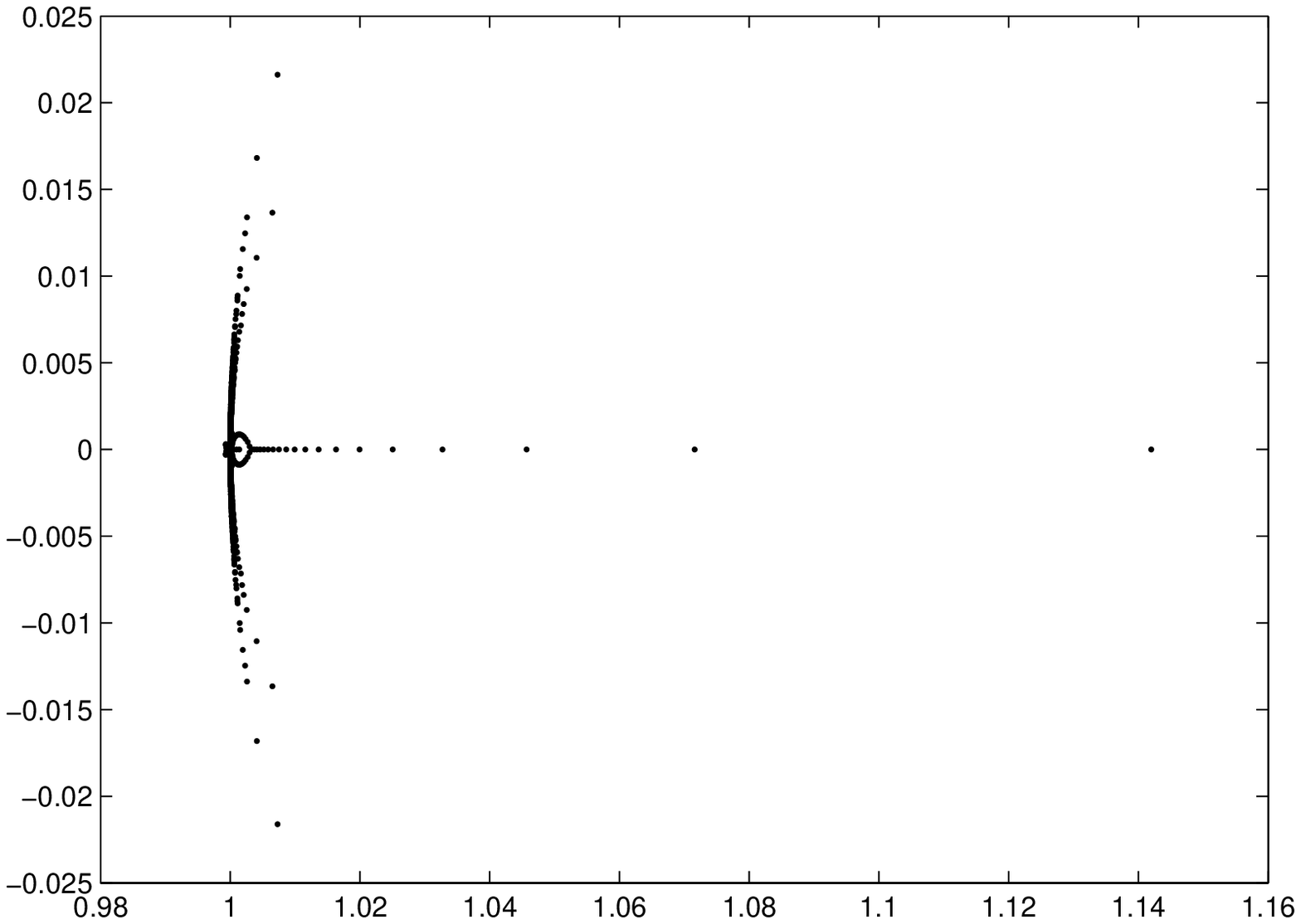,clip=, width=4.5cm} \hskip -0.5cm
\epsfig{file=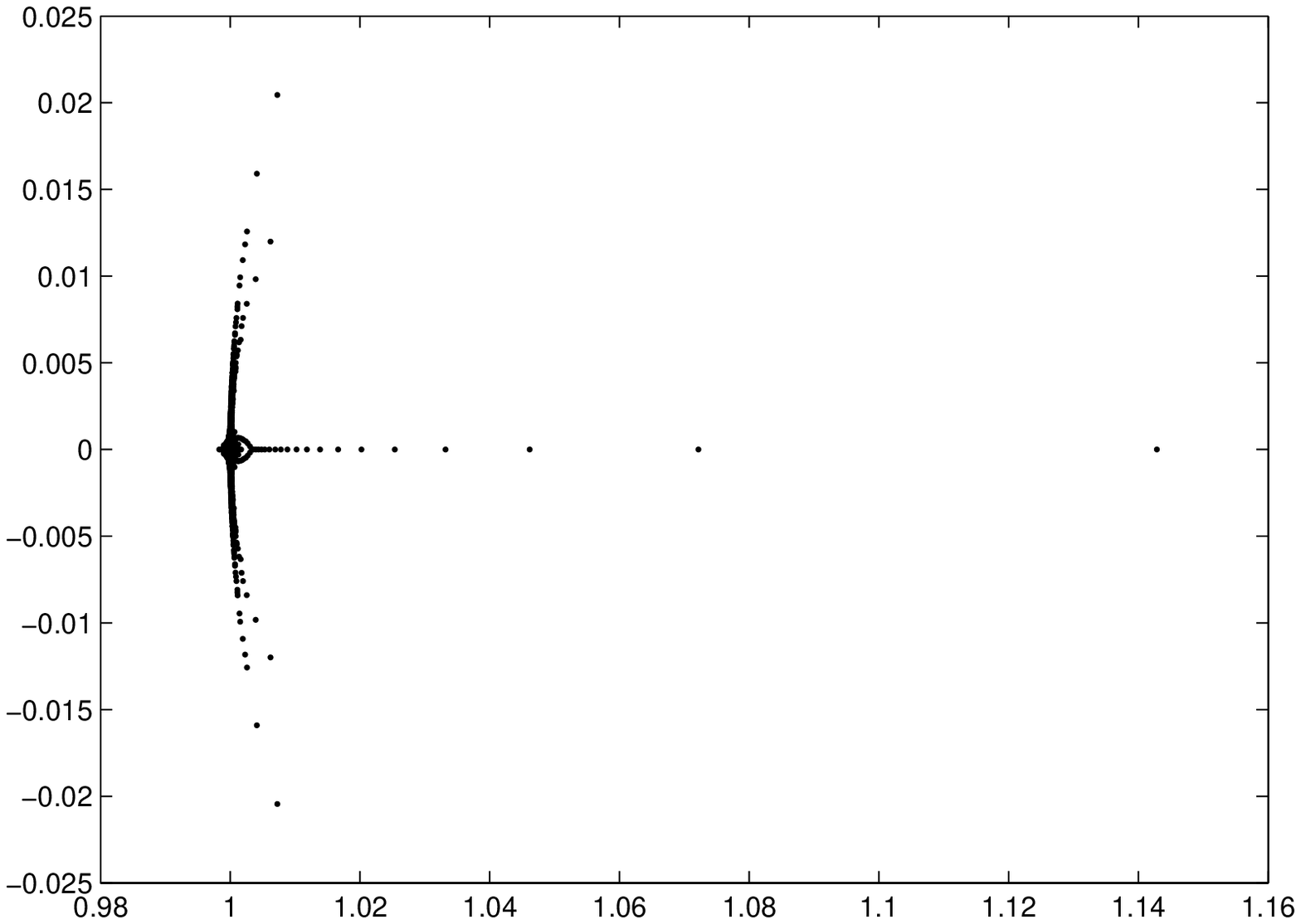,clip=, width=4.5cm}\hskip -0.5cm
\epsfig{file=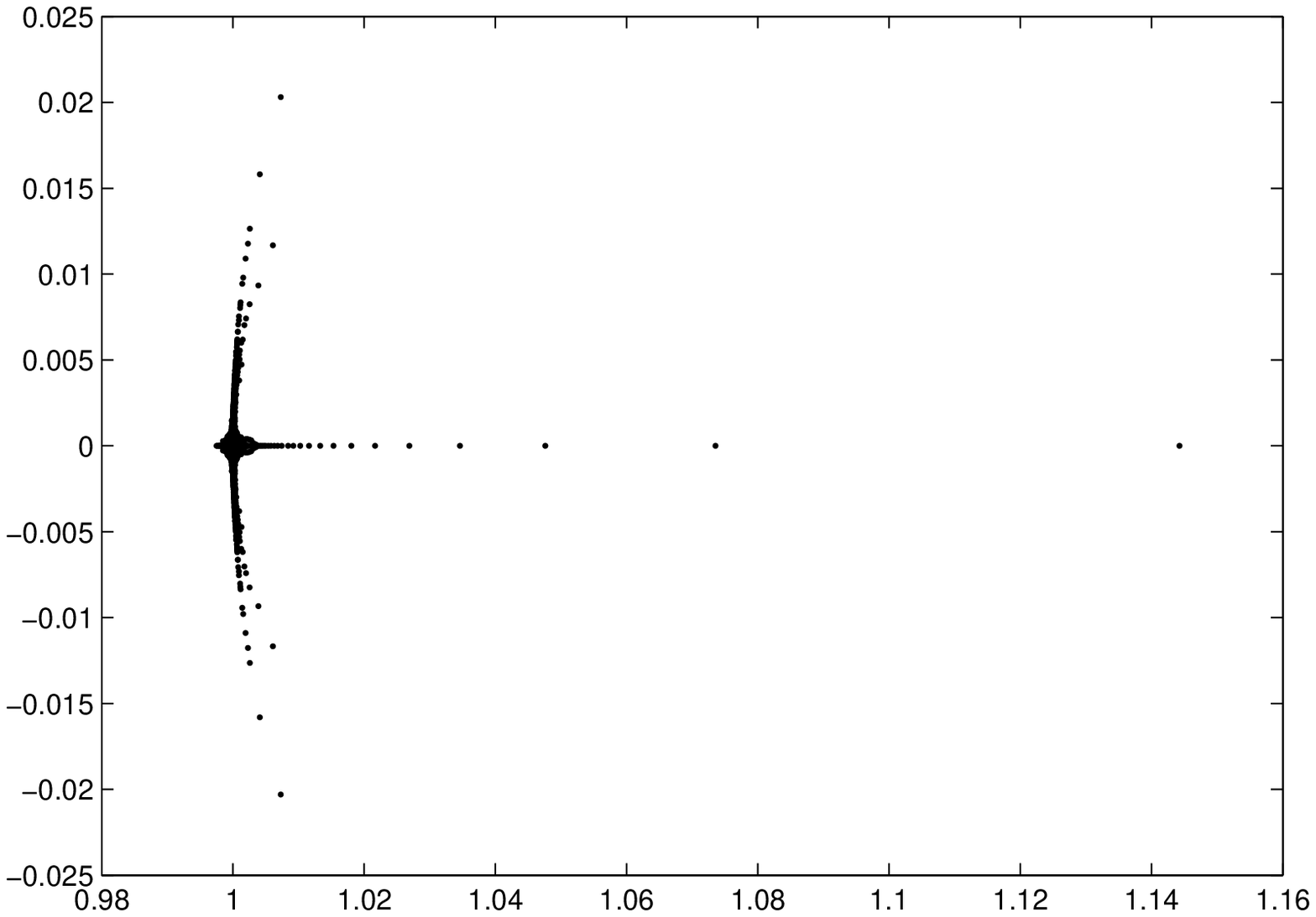,clip=, width=4.5cm}
%\vskip 1.5cm
\caption{Outlier analysis - structured, perturbed and unstructured meshes on the hexagonal domain $\Omega$,
$\a_1(x,y)$, $\a_2(x,y)$, $\a_3(x,y)$, and $\b(x,y)=[x \ y]^T$.}
\label{fig:outlier_analysis}
% grafici outlier cd 3 test function mesh regolare e irregolare
\end{figure}
%\clearpage
%-------------------------------------------------------------------------------
%-------------------------------------------------------------------------------
%-------------------------------------------------------------------------------
\section{Perspectives and future works} \label{sez:conclusions}
%
% Cenno a convenzione dominante
As emphasized in the introduction it is clear that the problem in
Figure \ref{fig:esagono_esagoni}.a is just an academic example,
due to the perfect structure made by equi-lateral triangles.
However, it is a fact that a professional mesh generator will
produce a partitioning, which  ``asymptotically'', that is a for a
mesh fine enough, tends to the one in Figure
\ref{fig:esagono_esagoni}.a. \par The latter fact has a practical
important counterpart, since the academic preconditioner $\tilde
P_n(a)$ is optimal for the real case with nonconstant coefficients
and with the partitioning  in B). A theoretical ground supporting
these observations is still missing and would be worth in our
opinion to be studied and developed in three directions: a) giving
a formal notion of convergence of a partitioning to a structured
one, b) proving spectral and convergence results in the case of an
asymptotically structured partitioning, c) extending the spectral
analysis in the case of weak regularity assumptions.\par Other
possible developments  include the case of higher order finite
element spaces: it would be intriguing to find an expression of
the underlying Toeplitz symbol as a function of the different
parameters in the considered finite element space (degrees of
freedom, polynomial degree, geometry of the mesh), and this could
be done uniformly in $d$ dimension, i.e. for equation
(\ref{eq:modello}) with $\Omega \subseteq  \mathbf{R}^d$, $d\ge
2$.
%
%Thus, in the case of structured uniform meshes, and under the regularity assumptions
%(\ref{eq:ipotesi_coefficienti}), the optimality of the method is
%theoretically proved: the iterations number for reaching the solution within a fixed accuracy can be bounded from above by a constant
%independent of the dimension $n=n(h)$ and the arithmetic cost of each iteration is at most proportional to the complexity of a matrix-vector product
%with matrix $A_n(\a,\b)$. \par
%Finally, we want to stress that numerical performances do not get worse in the case of unstructured meshes.
%In such cases, again, our proposed method makes only use of matrix-vector products (for sparse or even diagonal matrices)
%and of a solver for the related diffusion equation with constant coefficient. To this end, the main effort in devising efficient
%algorithms must be devoted only to this simpler problem.***
%Finally, special attention has ben paid to ****Table \ref{tab:IT-ES123esagono_perturbed}
%-------------------------------------------------------------------------------
%\section*{Acknowledgments}
%The author thanks the anonymous authors whose work largely
%constitutes this sample file. He also thanks the INFO-TeX mailing
%list for the valuable indirect assistance he received.
%-------------------------------------------------------------------------------
%\appendix
%\section{Appendix} \label{sez:Appendix_A}
%-------------------------------------------------------------------------------

%
\end{document}